\theoremstyle{plain}
\newtheorem{theorem}{Theorem}
\newtheorem{lemma}[theorem]{Lemma}
\newtheorem{claim}[theorem]{Claim}
\theoremstyle{definition}
\newtheorem{defn}[theorem]{Definition}
\newtheorem{conj}[theorem]{Conjecture}
\theoremstyle{remark}
\newtheorem{remark}[theorem]{Remark}
\newcommand{\cH}{\ensuremath{\mathcal{H}}}%
\newcommand{\cC}{\ensuremath{\mathcal{C}}}%
\newcommand{\col}{\ensuremath{\mathrm{Col}}}%
\title{\bf On Ryser's conjecture for \texorpdfstring{$t$}{t}-intersecting and degree-bounded hypergraphs}
\author{Zolt\'an Kir\'aly \thanks{Research is supported by a grant (no.\ K
    109240) from the National Development Agency
    of Hungary, based on a source from the Research and Technology Innovation
    Fund.}\\
\small Department of Computer Science and \\[-0.8ex]
\small Egerv\'ary Research Group (MTA-ELTE)\\[-0.8ex]
\small E\"otv\"os University\\[-0.8ex]
\small P\'azm\'any P\'eter s\'et\'any 1/C, Budapest, Hungary.\\[-0.8ex]
\small Research was finished when the author was a visiting research fellow\\[-0.8ex]
\small at Alfréd Rényi Institute of Mathematics, Hungarian Academy of Sciences.\\
\small\tt kiraly@cs.elte.hu
\and
    Lilla T\'othm\'er\'esz \footnotemark[1]\\
\small Department of Computer Science and \\[-0.8ex]
\small Egerv\'ary Research Group (MTA-ELTE)\\[-0.8ex]
\small E\"otv\"os University\\[-0.8ex]
\small P\'azm\'any P\'eter s\'et\'any 1/C, Budapest, Hungary.\\[-0.8ex]
\small\tt tmlilla@cs.elte.hu
}
\begin{document}

\date{}

\maketitle

{\let\thefootnote\relax\footnote{Mathematics Subject Classifications: 05C15, 05C65, 05B40, 05B25}}

\begin{abstract}
A famous conjecture (usually called Ryser's conjecture) that appeared in the  
 PhD thesis of his student, J.~R.~Henderson \cite{H}, states that
for an $r$-uniform
$r$-partite hypergraph $\cH$, the inequality
$\tau(\cH)\le(r-1)\!\cdot\! \nu(\cH)$ always holds. 

This conjecture is widely open, except in the case of
$r=2$, when it is
equivalent to K\H onig's theorem \cite{K}, and in the case of $r=3$, which was
proved by Aharoni in 2001 \cite{A}.

Here we study some special cases of Ryser's conjecture.
First of all, the most studied special case is when $\cH$ is
intersecting. Even for this special case, not too much is known: this
conjecture is proved only for $r\le 5$ in \cite{Gy,T2}. For $r>5$
it is also widely open.

Generalizing the conjecture for intersecting hypergraphs,  we conjecture
the following. 
If an $r$-uniform
$r$-partite hypergraph $\cH$ is $t$-intersecting (i.e., every two hyperedges
meet in at least $t<r$ vertices), then $\tau(\cH)\le r-t$. We prove this
conjecture for the case $t> r/4$.

Gy\'arf\'as
\cite{Gy} showed that Ryser's conjecture for intersecting hypergraphs is equivalent to
saying that the vertices of an $r$-edge-colored complete graph can be covered 
by $r-1$ monochromatic components.

Motivated by this formulation, we examine what fraction of the vertices can 
be covered by $r-1$ monochromatic components of \emph{different} colors in an
$r$-edge-colored complete graph.
We prove a sharp bound for this problem.

Finally we prove Ryser's conjecture for the very special case when the maximum
degree of the hypergraph is two.

\end{abstract}

\section{Introduction}

A hypergraph is a pair $\cH=(V,E)$ where $V$ is a finite set (vertices), and $E$ is a multiset of subsets of $V$ (hyperedges). 
A hypergraph is $r$-partite if its vertex set has a partition to $r$ nonempty
classes such that no hyperedge contains two vertices from the same class. 
We refer to the partite classes simply as \emph{classes} (note that in some
papers they are called sides). 
A set is called \emph{multi-colored} if it intersects every class in at most
one vertex, i.e., in an $r$-partite hypergraph every hyperedge is
multi-colored.
A hypergraph is $r$-uniform if all of its hyperedges have cardinality $r$. A
hypergraph is $d$-regular if every vertex is contained in exactly $d$
hyperedges.  
A hypergraph is $t$-intersecting if every pair of hyperedges have at least $t$
common vertices.  Throughout the paper we assume $0<t<r$ when speaking about
$t$-intersecting $r$-uniform hypergraphs.
A hypergraph is intersecting if it is 1-intersecting.

Let us introduce some more standard notations.
For a hypergraph $\cH$ with vertex set $V=V(\cH)$ and
hyperedge set $E=E(\cH)$
\[\textrm{the vertex covering number is: } \tau(\cH)=\min\{|T| : T\subseteq V,\; T\cap f\neq\emptyset \;\; \forall f\in
E\},\] 
\[\textrm{the edge covering number is: } \varrho(\cH)=\min\{|F| : F\subseteq E, \; \; \bigcup F=V\},\]
\[\textrm{the matching number is: } \nu(\cH)=\max\{|F| : F\subseteq E , \;  f_1\cap f_2=\emptyset \; \; \forall
f_1\ne f_2\in F \},\] 
\[\textrm{the maximum degree is: } \Delta(\cH)=\max\{|F|: \; F\subseteq E, \; \bigcap F\ne \emptyset\},\]
\[\textrm{the independence number is: } \alpha(\cH)=\max\{|X|: X\subseteq V,  \; f\not\subseteq X \;\; \forall
f\in E\},\] 
\[\textrm{the strong independence number is: }\alpha'(\cH)=\max\{|X|: X\subseteq V,  \; |f\cap X|\leq 1\ \;\; \forall
f\in E\}.\]

A famous conjecture of Ryser (which appeared in the  
 PhD thesis of his student, J.R.~Henderson \cite{H}) states that
for an $r$-uniform
$r$-partite hypergraph $\cH$, we have 
$\tau(\cH)\le(r-1)\!\cdot\! \nu(\cH)$. 

This conjecture is widely open, except in the special case of
$r=2$, when it is
equivalent to K\H onig's theorem \cite{K}, and when
$r=3$, which was
proved by Aharoni in 2001 \cite{A}, using topological results from \cite{AH}. We
mention also some related results. Henderson \cite{H} showed that the conjecture
cannot be improved if $r-1$ is a prime power. 
Haxell and Scott \cite{HS} showed that the constant in the conjecture cannot be smaller than $r-4$ for all but finitely many values of $r$.
 F\"uredi \cite{F} proved that
the fractional covering number is always at most $(r-1)\!\cdot\!\nu(\cH)$, and
Lov\'asz \cite{L} proved that the fractional matching number is always at
least $\frac2r\!\cdot\!\tau(\cH)$.
The hypergraphs achieving $\tau(\cH) = (r-1)\!\cdot\! \nu(\cH)$ have also been investigated, but this problem is also widely open. Haxell, Narins and Szab\'o characterized the sharp examples for $r=3$ \cite{HNSz1,HNSz2}. For larger values of $r$, truncated projective planes give an infinite family of sharp examles. 
Apart from these, there are some sporadic examples \cite{ABW,AP,FHMW,MSY}, moreover, Abu-Khazneh, Bar\'at, Pokrovskiy and Szab\'o \cite{ABPSz} constructed another infinite family of extremal hypergraphs but projective planes play also an important role in their construction.

Here we study some special cases of Ryser's conjecture.
First of all, the most studied special case is when $\nu=1$, i.e., when $\cH$ is
intersecting. Even for this case, not too much is known.  Gy\'arf\'as
\cite{Gy} showed that this special case of the conjecture is equivalent to
saying that the vertices of an $r$-edge-colored complete graph can be covered 
by $r-1$ monochromatic components (see below).
He also proved this conjecture for $r\le 4$ \cite{Gy}, and later Tuza \cite{T2}
proved it for $r=5$.  For $r>5$ this conjecture is also widely open. Some
recent papers study this special case, e.g., see \cite{AB, HS, FHMW, MSY}.
For intersecting hypergraphs, we generalize Ryser's conjecture by conjecturing the following.
If an $r$-uniform
$r$-partite hypergraph $\cH$ is $t$-intersecting, then $\tau(\cH)\le r-t$. We
prove this conjecture for the case $r>t> r/4$. This question was also studied (independently) by Bustamante and Stein, see \cite{BS}.

The construction of Gy\'arf\'as \cite{Gy} (see also in \cite{KZ}) is the
following. We associate a multi edge-colored graph to an $r$-partite $r$-uniform
hypergraph. 

\begin{defn} \label{def:Gyarfas_constr}
For an $r$-partite $r$-uniform hypergraph $\cH$, let $G=G(\cH)$ be the
following multi edge-colored graph: 

The vertex set of $G$ is $V(G)=E(\cH)$.
Two vertices $u,v\in V(G)$ are connected by an edge if the corresponding hyperedges $E_u, E_v\in E(\cH)$ have a nonempty intersection. The edge $uv$ is colored by the colors $\{i: E_u\textrm{ and } E_v \textrm{ share a vertex from the $i^{th}$ class}\}$. 
We denote the set of colors of edge $uv$ by $\col(uv)$. If $i\in \col(uv)$, then we say that the edge $uv$ \emph{has} the color $i$.
\end{defn}

Note that if $\cH$ is intersecting,
then $G$ is a complete graph.

\begin{remark}
The original construction of Gy\'arf\'as colored each edge $uv$ by only one
color, chosen arbitrarily from $\col(uv)$. 
\end{remark}

\begin{remark}\label{remark_multi}
  The color sets we defined in this way are transitive: if
  $i\in  \col(uv)\cap \col(vw)$, then $i\in  \col(uw)$. We call a complete
  graph $G$ \emph{multi $r$-edge-colored} if for each distinct vertex pair $\{u,v\}$
  we have $\emptyset\ne \col(uv)\subseteq[r]=\{1,\dots,r\}$ and if the
  coloring is transitive. 
  In a multi $r$-edge-colored graph, a \emph{monochromatic component} of color
  $i$ is a component of the subgraph formed by the edges using the color $i$.
  Note that -- as the coloring is transitive -- if $U$ is the vertex set
  of a monochromatic component of color $i$, then for every $u\ne v\in U$ we
  have $i\in\col(uv)$; in other words, the edges of $G$ having color $i$ form
  a partition of $V(G)$ into $i$-colored cliques.
Each vertex of $\cH$ in the class $i$ corresponds to
one maximal clique of color $i$, which is also a monochromatic ($i$-colored)
connected component of $G$. A set of vertices $T\subseteq V(\cH)$ covers the
hyperedges of $\cH$  (as in the definition of $\tau$) if and only if the
monochromatic components corresponding to its elements cover $V(G)$. 
\end{remark}

\begin{remark}\label{remark_multi2}
  We also note that for any edge-colored complete graph we can consider the
  color-transitive closure: for any edge $uv$ we define
  $\col(uv)=\{i \;|\; u \textrm{ and } v$
  are in the same monochromatic component of color $i\}$.
  The vertex sets of the monochromatic components of this multi edge-colored graph are the same as the vertex sets of the monochromatic components of the original edge-colored graph.
\end{remark}

Ryser's conjecture for intersecting hypergraphs is equivalent to the statement
that $r-1$ monochromatic components can cover $V(G(\cH))$.  The more general
conjecture for $t$-intersecting hypergraphs is equivalent to the statement
that for every multi $r$-edge-colored complete graph, where each edge has at
least $t$ colors, there is a set of $r-t$ monochromatic components that cover
the vertices (if $t<r$).

For the case of $r$-edge-colored complete graphs, we also study the following
problem: 
What fraction of the vertices can be
covered by $r-1$ monochromatic components of \emph{different} colors?
We prove a sharp bound for this problem, namely
$\big(1-\frac{r-2}{(r-1)^2}\big)\cdot |V(G)|$.
In the hypergraph language, this corresponds to the question of ``How many
hyperedges can be covered by a multi-colored set of size $r-1$ in an
intersecting $r$-partite $r$-uniform hypergraph?'' We show that the
hypergraphs giving the minimum are exactly the hypergraphs that can be obtained from a truncated
projective plane by replacing each hyperedge by $b$ parallel copies for some
integer $b$.  

Finally we prove Ryser's conjecture for the very special case when the maximum
degree of the hypergraph is two, i.e., when no vertex is contained in three or
more hyperedges. 

The preliminary version of this paper can be found in \cite{KTtech}.

\section{The \texorpdfstring{$t$}{t}-intersecting case}

\begin{conj}\label{conj:t-intersecting}
Let $\cH$ be an $r$-uniform $r$-partite $t$-intersecting hypergraph with $1\leq t \leq r-1$.
Then $\tau(\cH) \leq r-t$.
\end{conj}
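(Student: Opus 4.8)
I would pass to the multi $r$-edge-colored complete graph $G=G(\cH)$ of Definition~\ref{def:Gyarfas_constr}; since $\cH$ is in particular intersecting, $G$ is complete, every edge carries at least $t$ colors, and by Remark~\ref{remark_multi} it is enough to cover $V(G)$ by $r-t$ monochromatic components. The engine is a purely local observation. Fix any vertex $u$ and, for each color $i\in[r]$, let $C_i(u)$ be the $i$-colored component through $u$. These $r$ components cover $V(G)$, and a vertex $v$ lies in $C_i(u)$ precisely when $i\in\col(uv)$; as $|\col(uv)|\ge t$, every $v$ lies in at least $t$ of them. Consequently, if there is a $t$-element color set $T\subseteq[r]$ that is not realized as $\col(uv)$ by any $v$, then $\{C_i(u):i\notin T\}$ is a covering family of exactly $r-t$ monochromatic components, and we are done. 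So the problem reduces to the ``rich'' case: for every vertex $u$ and every $t$-set $T\subseteq[r]$ some neighbour $v$ satisfies $\col(uv)=T$.

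Next I would dispose of $t>r/3$ inside the rich case. Pick a vertex $u$ and two $t$-sets $T,T'$ with $|T\cap T'|$ minimum, together with neighbours $v,v'$ realizing them. Transitivity of the coloring (Remark~\ref{remark_multi}) gives $\col(vv')\cap T\subseteq T'$ and $\col(vv')\cap T'\subseteq T$, so $\col(vv')\subseteq(T\cap T')\cup\big([r]\setminus(T\cup T')\big)$, a set of size $|r-2t|$ when the overlap is minimal; this is smaller than $t$ exactly when $t>r/3$, contradicting $|\col(vv')|\ge t$. (In hypergraph language: three pairwise $t$-intersecting hyperedges cannot be arranged so that the pairs agree on essentially disjoint blocks of classes once $3t>r$.)

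The substantive case is $r/4<t\le r/3$, where no such clean contradiction is available and one must actually build the cover. Here I would iterate the construction with more witnesses: fix a hyperedge $E_1$, split the index set $[r]$ into $t$-sets $T_1,\dots,T_k$ plus a remainder of size $<t$ (note $k\ge 3$, and $k\ge 4$ when $t\le r/4$), and use richness to produce hyperedges $E_2,E_3,\dots$ agreeing with $E_1$ \emph{exactly} on $T_2,T_3,\dots$. The pairwise $t$-intersection requirement, together with the blocks where these hyperedges are forced to disagree, pins their values on the other blocks down to very few possibilities; analysing this rigid pattern, either the constraints become infeasible (a tighter avatar of the previous contradiction) or the pattern is rigid enough that a set of at most $r-t$ vertices drawn from these blocks already meets \emph{every} hyperedge of $\cH$. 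The arithmetic separating ``infeasible'' from ``short cover'' is exactly where $t>r/4$ enters, with configurations near (blown-up) truncated projective planes sitting on the boundary.

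I expect the main obstacle to be precisely this last step. For $t>r/3$ one only needs a contradiction, but for $r/4<t\le r/3$ one must exhibit a genuine cover of size $\le r-t$ that handles all hyperedges simultaneously, not just the handful of explicit witnesses, so the rigidity analysis has to be carried out in its sharp form; I would also treat the small cases and the divisibility-degenerate cases (e.g.\ $t\mid r$) by hand, since the block decomposition collapses there.
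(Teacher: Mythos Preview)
Your reduction to the ``rich'' case is correct, and the three-vertex contradiction for $t>r/3$ is a genuinely different and cleaner route than the paper's in that range (the paper instead builds an explicit two-vertex cover, its Lemma~\ref{lem:t-metszo_eros}, with no richness hypothesis at all). Note also that the full statement is a conjecture; both you and the paper are addressing only $t>r/4$.

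The gap is in $r/4<t\le r/3$. Your block-witness plan never leaves the sketch stage: the dichotomy ``either infeasible or rigid enough for a short cover'' has no mechanism behind it, and this is exactly where all the work lies. Concretely, for your witnesses $E_1,E_2,E_3$ with $\col(E_1E_i)=T_i$ one only gets $\col(E_2E_3)\subseteq [r]\setminus(T_2\cup T_3)$, a set of size $r-2t\ge t$ in this range, so nothing is infeasible; and the rigidity of three specific witnesses says nothing about why a cover built from them should meet an \emph{arbitrary} hyperedge of $\cH$. What the paper actually does is different in kind. Its two-vertex Lemma~\ref{lem:t-metszo_eros} (together with an analogous lemma for edges carrying more than $t$ colors) already handles all $r\le 4t-2$, leaving only the boundary $r=4t-1$ with every $|\col(\cdot)|=t$. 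There the paper chooses a triangle $xyz$ that \emph{maximises} the number $k$ of colors common to all three sides and does a case split on $k$ ($k=0$; $0<3k\le t$; $3k>t$), building the cover from components through $x,y,z$. The extremality of the triangle is the essential lever: when one checks that a hypothetical uncovered fourth vertex $w$ would force too many colors on $xw,yw,zw$, the key bound $|\col(yw)\cap\col(zw)|\le k$ (and its analogues) comes precisely from ``no triangle has more than $k$ common colors''. Your block-based triangle has zero common colors by construction, so it carries no such global control; the missing idea is an \emph{extremal} choice of the base configuration rather than an arbitrary one produced by richness.
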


\begin{theorem}\label{p:t-intersecting}
If $\cH$ is an $r$-uniform $r$-partite $t$-intersecting hypergraph and
$\frac{r}{4}< t \leq r-1$, then $\tau(\cH) \leq r-t$.
\end{theorem}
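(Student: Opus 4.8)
The plan is to work with the \emph{minimum pairwise intersection}
$s:=\min\{\,|E\cap E'| : E\ne E'\in E(\cH)\,\}$; it satisfies $s\ge t$, and if $\cH$ has at most one distinct hyperedge then $\tau(\cH)\le 1\le r-t$, so assume $s$ is defined. Fix hyperedges $E_1,E_2$ with $|E_1\cap E_2|=s$, put $I:=E_1\cap E_2$, let $M$ be the set of the $s$ classes meeting $I$, $N:=[r]\setminus M$, and for $i\in N$ write the vertex of $E_1$ (resp.\ $E_2$) in class $i$ as $v_i$ (resp.\ $u_i$), so $v_i\ne u_i$. The fact driving everything is: if a hyperedge $E$ is disjoint from $I$, then it agrees with $E_1$ only in classes of $N$ and with $E_2$ only in a \emph{disjoint} set of classes of $N$; as each of these agreement sets has size $\ge t$, this forces $2t\le|N|=r-s$. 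In particular, if $s>r-2t$ then no hyperedge is disjoint from $I$, so $I$ is a transversal and $\tau(\cH)\le|I|=s$; hence we are already done whenever $r-2t<s\le r-t$. It remains to handle $s>r-t$ and $s\le r-2t$.

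\emph{Large $s$ ($s>r-t$).} Start from the transversal $I$ and delete its vertices greedily, keeping a transversal at each step. Deleting the vertex in class $i$ from the current transversal $T$ can fail only if some hyperedge meets $T$ in exactly that vertex, and then that hyperedge agrees with $E_1$ in at most $1+(r-|T|)$ classes; this is smaller than $t$ once $|T|\ge r-t+2$, so no deletion fails above size $r-t+1$. If a deletion failed at $|T|=r-t+1$, the offending hyperedge would agree with $E_1$ in exactly $t$ classes, and — since the classes of $T$ lie in $M$ — it would then agree with $E_2$ in only $s-(r-t)\le t-1$ classes, contradicting $t$-intersection. Hence we can shrink $T$ to size $\le r-t$, so $\tau(\cH)\le r-t$.

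\emph{Small $s$ ($t\le s\le r-2t$, so $t\le r/3$).} Here one augments $I$. Choose $N^*\subseteq N$ with $|N^*|=(r-s)-2t+1$ and set $J:=\{v_i:i\in N^*\}\cup\{u_i:i\in N^*\}$. A hyperedge disjoint from $I$ that also missed $J$ would have all of its ($\ge t$) agreements with $E_1$ and all of its ($\ge t$) agreements with $E_2$ inside the $(2t-1)$-element set $N\setminus N^*$ — impossible, since these two agreement sets are disjoint. Therefore $I\cup J$ is a transversal, of size at most $2r-s-4t+2$, which is $\le r-t$ as soon as $s\ge r-3t+2$.

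The cases above settle everything except $s\le r-3t+1$, and for integer $r,t$ this combined with $t>r/4$ pins down $r=4t-1$ and $s=t$. I expect this boundary case to be the main obstacle: each construction above produces a transversal of size exactly $r-t+1$, one too large, so one has to exploit the extra rigidity forced by $s=t$ — for instance, that no pair of hyperedges has intersection strictly between $t$ and $r-t$ (such a pair would fall into one of the earlier cases), and that every hyperedge disjoint from $I$ meets each of $E_1,E_2$ in exactly $t$ vertices, so that its "$v$-side" and its "$u$-side" partition all but $t-1$ classes of $N$ — in order to shave off the last vertex, or else to dispatch the single configuration $r=4t-1$, $s=t$ by a dedicated argument.
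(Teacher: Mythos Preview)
Your reduction to the boundary case $r=4t-1$, $s=t$ is correct, and your handling of all other cases is sound (and translates almost verbatim into the paper's Lemmas, which carry out the same ``take $I$ plus a balanced set of $v_i$'s and $u_i$'s'' construction in the Gy\'arf\'as graph language). The gap is that you do not actually prove the boundary case: you write that you ``expect this boundary case to be the main obstacle'' and list some structural observations, but you give no argument. This case \emph{is} the heart of the theorem --- everything else is the easy part --- and it cannot be finished within your two-hyperedge framework. With $r=4t-1$ and every pairwise intersection equal to $t$, both of your constructions yield a transversal of size exactly $r-t+1=3t$, and there is no way to drop a vertex using only $E_1$ and $E_2$: for every vertex $w$ of $I\cup J$ one can build a hyperedge meeting $I\cup J$ precisely in $w$ while still intersecting each of $E_1,E_2$ in exactly $t$ vertices.

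The paper's resolution brings in a \emph{third} hyperedge. One chooses a triple $E_1,E_2,E_3$ maximizing $k:=$ the number of classes in which all three agree, and builds the cover out of pieces of $E_1\cap E_2$, $E_1\cap E_3$, $E_2\cap E_3$ together with a few extra vertices; the maximality of $k$ is then used to bound, for any uncovered hyperedge $E$, how often $E$ can reuse a class in its intersections with two of $E_1,E_2,E_3$ simultaneously. This yields a counting contradiction (three sets of size $t$ must fit into too small a universe), but the construction and the bookkeeping split into several subcases ($k=0$, $0<3k\le t$, $3k>t$) and are genuinely more delicate than anything in your write-up. Your closing paragraph does not contain this idea, so as it stands the proof is incomplete.
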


Using Gy\'arf\'as' construction (Definition \ref{def:Gyarfas_constr}),
Theorem \ref{p:t-intersecting} follows from 
the following statement: 

\begin{theorem}\label{thm:t-metszo_eros}
  Let $G$ be a multi $r$-edge-colored complete graph
  where each edge has at least $t$ different colors. 
If $r-1\geq t> \frac{r}{4}$, then $V(G)$ can be covered by at most $r-t$
monochromatic components. 
\end{theorem}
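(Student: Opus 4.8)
\section*{Proof proposal for Theorem \ref{thm:t-metszo_eros}}

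The plan is to argue in the hypergraph language, since by Definition \ref{def:Gyarfas_constr} and Remark \ref{remark_multi} the statement is equivalent to Conjecture \ref{conj:t-intersecting} in the range $t>r/4$ (this is the reduction already described for Theorem \ref{p:t-intersecting}). So let $\cH$ be $r$-uniform, $r$-partite and $t$-intersecting with $r/4<t\le r-1$, put $s=r-t$, and identify each hyperedge with the vector in $V_1\times\cdots\times V_r$ recording its vertex in each class. The $t$-intersecting property says precisely that any two hyperedge-vectors agree in at least $t$ coordinates, i.e.\ their Hamming distance is at most $s$. A ``covering vertex'' of $\cH$ is a coordinate--value pair $(i,x)$; Theorem \ref{thm:t-metszo_eros} asks for $s$ such pairs meeting every hyperedge.

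First I would fix one hyperedge $E^*$ as a reference and, for every hyperedge $F$, set $D(F)=\{i : F_i\ne E^*_i\}$, so $|D(F)|\le s$. If some $s$-element set $S\subseteq[r]$ satisfies $S\ne D(F)$ for all $F$, then $\{(i,E^*_i):i\in S\}$ is a cover of size $s$: a hyperedge $F$ avoids all of these pairs only if $S\subseteq D(F)$, hence $S=D(F)$ because $|D(F)|\le s=|S|$, a contradiction. (If instead some coordinate is constant on all hyperedges, a single pair already covers $\cH$, using $s\ge1$.) So I may assume every $s$-subset of $[r]$ occurs as some $D(F)$. Now for any two hyperedges $F,F'$ one has $D(F)\triangle D(F')\subseteq\{i:F_i\ne F'_i\}$, a set of size $\le s$; comparing two $s$-subsets whose intersection is as small as possible (disjoint if $2s\le r$, of size $2s-r$ otherwise) forces $2(r-s)\le s$, i.e.\ $t\le r/3$. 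Thus the above disposes of the whole range $t>r/3$, and only $r/4<t\le r/3$ (equivalently $2r/3\le s<3r/4$) remains.

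In that range the configuration is rigid: relative to \emph{every} reference hyperedge all $(r-t)$-subsets are disagreement sets, which packs many hyperedges near $E^*$. I would exploit this by choosing two hyperedges $E^*,G^*$ at the maximum distance $s$ and analysing the hyperedges through the set $D=\{i:E^*_i\ne G^*_i\}$ of differing coordinates, $|D|=s$. Writing, for a hyperedge $F$, $a$ for the number of coordinates outside $D$ where $F$ differs from both $E^*$ and $G^*$, and $e$ for the number of coordinates inside $D$ where $F$ differs from both, the conditions $|F\cap E^*|\ge t$ and $|F\cap G^*|\ge t$ combine to give $2a+e\le s$. The candidate cover uses the $s$ coordinates of $D$, each carrying either the $E^*$-value or the $G^*$-value according to a partition $D=D_E\sqcup D_G$; a hyperedge escapes it only if it differs from $E^*$ throughout $D_E$ and from $G^*$ throughout $D_G$, and (after checking how the partition can be chosen) the hyperedges that escape \emph{every} such partition are exactly those with $e=s$, i.e.\ those using a third value on all of $D$ and agreeing with $E^*=G^*$ off $D$. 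These must be mopped up by trading a few coordinates of $D$ for coordinates outside $D$, and making enough room to satisfy both demands at once is precisely where the hypothesis $t>r/4$ enters. I expect this balancing step, together with the case analysis of how the various ``third-value'' hyperedges overlap, to be the main obstacle; the $t>r/3$ argument above is routine by comparison.
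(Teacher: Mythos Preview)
Your argument for the range $t>r/3$ is correct and pleasantly short; it is also genuinely different from the paper's proof. The paper never uses the ``every $s$-subset is a disagreement set'' dichotomy; instead it works directly in $G$, first disposing of $r\le 4t-2$ (and of the case where some edge carries more than $t$ colours) by writing down an explicit family $\cC(x,I)\cup\cC(x,J)\cup\cC(y,J)$ attached to a single edge $xy$, and then treating the residual case $r=4t-1$ with exactly $t$ colours per edge by fixing a triangle $xyz$ that maximises the number $k$ of colours common to all three sides and doing a case analysis on $k$. Your Hamming-distance argument is more conceptual in the range $t>r/3$ and already beats the Bustamante--Stein threshold $t\ge(r-2)/2$ mentioned in Remark~\ref{rem_BS}.

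Where the proposal breaks down is precisely the range $r/4<t\le r/3$, which is the heart of the theorem. The phrase ``after checking how the partition can be chosen'' hides a real gap: you show that each hyperedge with $e<s$ is covered by \emph{some} partition $D=D_E\sqcup D_G$, but you need a \emph{single} partition that works for all of them simultaneously. Writing $A_F=\{i\in D:F_i=E^*_i\}$ and $B_F=\{i\in D:F_i=G^*_i\}$, the hyperedge $F$ escapes $(D_E,D_G)$ exactly when $A_F\subseteq D_G$ and $B_F\subseteq D_E$; satisfying all these ``not both'' constraints at once is a $2$-colouring problem on $D$ with no evident reason to be feasible. Since this step is not under control, you also have no bound on how many hyperedges must be ``mopped up'' by trading coordinates of $D$ for outside coordinates, so the final balancing cannot even be set up. (Your observation that the $e=s$ hyperedges all agree with $E^*$ off $D$, hence are killed by a single outside coordinate, is correct---but that leaves only $s-1$ coordinates inside $D$, making the simultaneous-partition problem harder, not easier.)

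The paper's triangle argument sidesteps this entirely: the maximality of $k$ bounds how often any colour can repeat among the three edges $xw,yw,zw$ from an uncovered vertex $w$, and this turns the existence of a cover into a pure counting inequality that is tight exactly at $r=4t-1$. Your framework does not obviously supply an analogue of that leverage, so pushing it down to $t>r/4$ would require a new idea for choosing the partition (or a different cover template) rather than just more bookkeeping.
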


\begin{remark}\label{not_stronger}
  Conjecture \ref{conj:t-intersecting} is seemingly a
  strengthening of Ryser's conjecture for
intersecting hypergraphs (which corresponds to $t=1$).
However, the statement is stronger for smaller $t$
values.

To see this, suppose the conjecture is proved for a fixed $t$ and for every
$r>t$. 
To prove it for $t+1$, suppose we are given a multi $r$-edge-colored complete graph
where each edge has at least $t+1$ different colors and $r>t+1$.
Deleting color $r$ from every $\col(uv)$, we get a multi
$(r\!-\!1)$-edge-colored complete graph where each edge has at least
$t$ different colors, so by the assumption its vertex set can be covered by
$r-1-t=r-(t+1)$ monochromatic components.
\end{remark}

\begin{remark}\label{rem_BS}
  In a recent manuscript \cite{BS},
  Bustamante and Stein formulated independently the same 
conjecture (we are thankful for the reviewer who raised our attention to it).
They proved that the conjecture is true if $r-1\geq t\ge \frac{r-2}{2}$.
We note that our theorem is stronger (except that their result contains the
well-known case $r=4,\; t=1$ while our result does not).
\end{remark}


\begin{proof}[Proof of Theorem \ref{thm:t-metszo_eros}]
We assume $[r]=\{1,2,\ldots,r\}$ is the set of colors, and if $x$ is a vertex and $I\subseteq [r]$ is a set of colors, then we denote by $\cC(x,I)$ the set of monochromatic components containing $x$ and having a color in $I$. 

The proof goes by induction on the number vertices. If $|V(G)|\le 2$, we can
cover $V(G)$ by 1 monochromatic component.
If there are $x\ne y\in V(G)$ where
$|\col(xy)|=r$, then contract the edge $xy$ to a vertex $x^*$
(by color-transitivity, for any vertex
$z\ne x,y$ we have $\col(zx)=\col(zy)$, so we define $\col(zx^*)=\col(zx)$).
By induction, the graph obtained can be covered by at most $r-t$ monochromatic
components. It is easy to see that the preimages of these components are
monochromatic and cover $V(G)$. So from this point we may (and will)
suppose that $|\col(xy)|<r$ for every pair $x\ne y$.

First we prove some special cases.

\begin{lemma}\label{lem:t-metszo_eros}
Let $G$ be a multi $r$-edge-colored complete graph, where
each edge has
exactly $t$ different colors.
If $t+1\le r\le 4t-2$, then $V(G)$ can be covered by at most $r-t$
monochromatic components. 
\end{lemma}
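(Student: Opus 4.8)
The plan is to argue by induction on $r$ (for a fixed relation between $r$ and $t$), or alternatively directly, exploiting the transitivity of the coloring. Here is the structure I would follow. Fix a vertex $x$ and look at $\cC(x,[r])$, the monochromatic components through $x$; since each edge from $x$ carries exactly $t$ colors, for any other vertex $y$ the component through $y$ of each color in $\col(xy)$ is already one of the components in $\cC(x,[r])$, and $|\col(xy)|=t$. So if we could show that some $t$-element (or even $(r-t)$-element) subcollection of $\cC(x,[r])$ already hits every vertex, we would be done. The obstruction is a vertex $y$ avoided by all chosen components; such a $y$ must have its $t$ colors $\col(xy)$ disjoint from our chosen color set. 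The counting then becomes a question about how the color sets $\col(xy)$, $y\neq x$, can be distributed over $[r]$.

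Concretely, I would set it up as a \emph{covering problem on color sets}. For each $y\neq x$ we have a $t$-subset $\col(xy)\subseteq[r]$, and by transitivity these subsets have strong structure: if $\col(xy)\cap\col(xz)\neq\emptyset$, then that common color $i$ puts $y,z,x$ in one $i$-monochromatic component, hence $\col(yz)\ni i$ as well, and more is forced. I would try to show that the family $\{\col(xy): y\neq x\}$ has the property that a small transversal-type structure exists: either there is a color appearing in many of the sets (high-degree color), in which case picking that monochromatic component plus recursing on the few remaining vertices works, or the sets are "spread out," in which case a greedy/pigeonhole choice of $r-t$ colors hits everything. The arithmetic condition $r\le 4t-2$ is exactly what should make the pigeonhole go through: if two disjoint $t$-sets both appear we have already used up $2t$ colors, and $r\le 4t-2 < 4t$ limits how many pairwise-disjoint $t$-sets can coexist, so the "spread out" case cannot be too spread out.

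The main obstacle I anticipate is handling the case where no color has large degree among the $\col(xy)$'s but the $t$-sets are not all mutually intersecting either — i.e., the intermediate regime. Here I would need to use transitivity more cleverly: pick $y$ with $\col(xy)=I$, then any $z$ with $\col(xz)\cap I=\emptyset$ has $\col(xz)=J$ disjoint from $I$, and then for a third vertex $w$, $\col(xw)$ must intersect $I\cup J$ in a controlled way (it cannot be disjoint from both if $|I\cup J|=2t$ is already more than half of $[r]$, which is where $r\le 4t-2$, equivalently $2t\ge (r+2)/2$, bites). So I would do a short case analysis on how many pairwise-disjoint color sets occur among the $\col(x\cdot)$: if at most one, pick its $t$ colors and we are done with $t\le r-t$ components; if two disjoint sets $I,J$ occur, then every vertex's color set meets $I\cup J$, and I would cover using components of colors in $I\cup J$, needing to check $2t\le r-t$ fails in general — so instead I would cover $x$ and the $I$-vertices and $J$-vertices by a few components and bound the leftover by induction on $|V(G)|$, which is the induction hypothesis already available in the proof of Theorem~\ref{thm:t-metszo_eros}. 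The delicate point will be making the component count come out to exactly $r-t$ and not $r-t+1$; I expect this forces choosing the colors inside $I\cup J$ rather than freshly, and using that three pairwise-disjoint $t$-sets would need $3t\le r\le 4t-2$, impossible when $t\ge 2$ and $r\le 4t-2 < 3t$ for $t\ge 3$ — so the only genuinely hard subcase is small $t$, which can be checked by hand.
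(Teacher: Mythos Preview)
Your plan has genuine gaps and does not reach a proof. Two concrete arithmetic errors:

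\begin{itemize}
\item In the ``at most one disjoint set'' case you say ``pick its $t$ colors and we are done with $t\le r-t$ components.'' But $t\le r-t$ means $r\ge 2t$, and the lemma must also cover $t+1\le r< 2t$, where $r-t<t$. Covering by the $t$ components $\cC(x,\col(xy_0))$ is too many there.
\item You write ``three pairwise-disjoint $t$-sets would need $3t\le r\le 4t-2$, impossible when \ldots\ $4t-2<3t$ for $t\ge 3$.'' In fact $4t-2<3t$ iff $t<2$. For every $t\ge 2$ the range $3t\le r\le 4t-2$ is nonempty, so three (even more) pairwise-disjoint colour sets among the $\col(x\cdot)$ can certainly occur. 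Your case split therefore does not close.
\end{itemize}

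The deeper issue is that you only ever take components through a single base vertex $x$. The paper's proof instead fixes an \emph{edge} $xy$ with $\col(xy)=I=[t]$ and takes components through \emph{both} endpoints: in the case $r>2t$ one takes $\cC(x,I)\cup\cC(x,J)\cup\cC(y,J)$ for a suitable $J\subseteq[r]\setminus I$ of size $j=\lfloor r/2\rfloor-t$, giving $t+2j\le r-t$ components. The payoff is that if $z$ is uncovered then $\col(xz)$ and $\col(yz)$ both lie in the remaining $r-t-j$ colours \emph{and are disjoint from each other}, because any common colour would (by transitivity) lie in $\col(xy)=I$, which is already covered. That forces $2t\le r-t-j=\lceil r/2\rceil$, i.e.\ $r\ge 4t-1$, contradicting the hypothesis. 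For $r\le 2t$ the same two-endpoint trick lets one get away with only $r-t$ colours from $I$. Your single-vertex approach cannot manufacture this disjointness of $\col(xz)$ and $\col(yz)$, and without it the pigeonhole does not yield the sharp bound $r-t$.
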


\begin{proof}
Take any edge $xy$. Without loss of generality, we can suppose that
$\col(xy)=I=[t]$.

First consider the case $r\leq 2t$. 
Let $J=[r-t]$. Now $J\subseteq I$ since $r-t\leq t$.
We claim that $\cC(x,J)=\cC(y,J)$ covers $V(G)$. If a vertex $z$ is not
covered, then $\col(xz)=\col(yz)=\{r-t+1,\ldots,r\}$. However, since each
monochromatic component is a clique, we get $\{r-t+1,\ldots,r\}\subseteq \col(xy)=I$, so
$t=r$ contradicting 
the assumption $t<r$. 

Thus it remains to prove the case $r>2t$. Let
$j=\lfloor\frac{r}{2}\rfloor-t$ and  
$J=\{t+1, \dots, t+j\}$ if $j>0$, and
$J=\emptyset$ otherwise. Take $\cC(x,I)\cup \cC(x,J)\cup \cC(y,J)$. 
We claim that these $t+2j\le r-t$ monochromatic components cover the
vertices of $G$. 
If a vertex $z$ is not
covered, then $\col(xz)\subseteq \{t+j+1, \dots, r\}$ and 
$\col(yz)\subseteq \{t+j+1, \dots, r\}$ and, as the coloring is transitive,
$\col(xz)\cap \col(yz)\subseteq I$,
thus $\col(xz)\cap \col(yz)=\emptyset$.
However, $|\col(xz)|=|\col(yz)|=t$, so $2t\leq r-t-j$, i.e., 
$2t\leq \lceil\frac{r}{2}\rceil$ or equivalently $r\geq 4t - 1$, a contradiction.
\end{proof}

\begin{lemma}
  Let $G$ be a multi $r$-edge-colored complete graph where each
edge has 
at least $t$ different colors.
If $t+1\le r\leq 4t-1$ and there is an edge $xy$ with $t<|\col(xy)|<r$, 
then $V(G)$ can be covered by at most $r-t$
monochromatic components. 
\end{lemma}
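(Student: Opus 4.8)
The plan is to fix the over-colored edge $xy$ and, after relabelling colors, assume $\col(xy)=[s]$ with $t<s<r$. Write $m=r-s$ for the number of colors not on $xy$ and $C=\{s+1,\dots,r\}$; since $s\ge t+1$ and $r\le 4t-1$ we get $1\le m\le 3t-2$, and this bound on $m$ is exactly where the hypothesis $r\le 4t-1$ enters. Two facts drive everything. First, by transitivity of the coloring, every vertex $z\ne x,y$ satisfies $\col(xz)\cap\col(yz)\subseteq\col(xy)=[s]$. Second, since $i\in\col(xy)$ for every $i\in[s]$, the vertices $x$ and $y$ lie in the same monochromatic component of each color $i\in[s]$, so $\cC(x,S)=\cC(y,S)$ for every $S\subseteq[s]$, and $|\cC(x,[s])|\le s$.

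I would split into two cases according to the size of $s$. Consider first $s\le r-t$ (equivalently $m\ge t$). Here I take the $\le s$ components $\cC(x,[s])=\cC(y,[s])$ together with $\cC(x,C_x)\cup\cC(y,C_y)$, where $C_x,C_y\subseteq C$ are chosen with $|C_x|+|C_y|=m-t$ and $|C_x\cap C_y|=\max(0,m-2t+1)$; such sets exist precisely because $m-2t+1\le t-1$, i.e.\ $m\le 3t-2$. This uses at most $s+(m-t)=r-t$ components. If some vertex $z$ is not covered, then $z\notin\cC(x,[s])=\cC(y,[s])$ forces $\col(xz)\subseteq C$ and $\col(yz)\subseteq C$, so by the first fact $\col(xz)$ and $\col(yz)$ are disjoint subsets of $C$; moreover $\col(xz)\subseteq C\setminus C_x$ and $\col(yz)\subseteq C\setminus C_y$. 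Hence $2t\le|\col(xz)|+|\col(yz)|\le |C\setminus(C_x\cap C_y)|=m-\max(0,m-2t+1)\le 2t-1$, a contradiction.

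Now consider $s>r-t$ (equivalently $m\le t-1$), where $xy$ carries almost all the colors and even $\cC(x,[s])$ is too many components. Here I take just $\cC(x,P)$ for an arbitrary $(r-t)$-subset $P\subseteq[s]$ (possible since $s>r-t$). If a vertex $z$ is not covered, then $|\col(xz)\cap[s]|\le|[s]\setminus P|=t-m$, so $|\col(xz)\cap C|\ge m$ and therefore $C\subseteq\col(xz)$; by the first fact this forces $\col(yz)\cap C=\emptyset$, i.e.\ $\col(yz)\subseteq[s]$. But $|\col(yz)|+|P|\ge t+(r-t)=r>s=|[s]|$, so $\col(yz)$ meets $P$, and since $\cC(x,P)=\cC(y,P)$, the vertex $z$ lies in $\cC(x,P)$ after all — contradiction.

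The main obstacle is this second case: the natural cover $\cC(x,[s])$ overshoots by exactly one component, and every symmetric repair (dropping colors from $[s]$ and compensating with monochromatic components of colors in $C$ through $x$ and/or $y$) again costs precisely one component too many. The resolution is the observation above that any vertex barely missed by a budgeted family $\cC(x,P)$ is forced to have $C\subseteq\col(xz)$, hence $\col(yz)\subseteq[s]$ with $|\col(yz)|\ge t$, which — because $\cC(x,P)=\cC(y,P)$ and $|P|=r-t$ is already large enough — covers it. In the first case the only delicate point is arranging the overlap of $C_x$ and $C_y$ so that the uncovered part $C\setminus(C_x\cap C_y)$ of $C$ has fewer than $2t$ colors, which is exactly what $m\le 3t-2$ permits.
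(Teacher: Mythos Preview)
Your proof is correct and follows essentially the same approach as the paper's: the same case split on whether $s=|\col(xy)|$ is at most or greater than $r-t$, the same cover $\cC(x,P)$ with $|P|=r-t$ in the large-$s$ case, and in the small-$s$ case the same scheme of taking $\cC(x,[s])$ together with extra components through $x$ and $y$ in colors outside $[s]$. The only cosmetic difference is that the paper simply takes $C_x=C_y=J$ with $|J|=\lfloor (m-t)/2\rfloor$, whereas you allow $C_x\ne C_y$ with a prescribed overlap; both choices work under the same constraint $m\le 3t-2$, and your more elaborate choice does not gain anything.
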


\begin{proof}
Take an edge $xy$ with $|\col(xy)|>t$. Without loss of generality, we can suppose that
$\col(xy)=I=[\ell]$ where $t<\ell<r$.

First consider the case $r\leq t+\ell$. 
Let $J=[r-t]$. Now $J\subseteq I$.
We claim that $\cC(x,J)=\cC(y,J)$ covers $V(G)$. If a vertex $z$ is not
covered, then $\col(xz)=\col(yz)=\{r-t+1,\ldots,r\}$. However, since the
coloring is transitive, we get $\{r-t+1,\ldots,r\}\subseteq I$, so
$\ell=|I|=r$ contradicting 
the assumption $\ell<r$. 

Thus it remains to prove the case $r>t+\ell$. Let
$j=\lfloor\frac{r-t-\ell}{2}\rfloor$ and  
$J=\{\ell+1, \dots, \ell+j\}$ if $j>0$, and
$J=\emptyset$ otherwise. Take $\cC(x,I)\cup \cC(x,J)\cup \cC(y,J)$. 
We claim that these $\ell+2j\le r-t$ monochromatic components cover the
vertices of $G$. 
If a vertex $z$ is not
covered, then $\col(xz)\subseteq \{\ell+j+1, \dots, r\}$ and 
$\col(yz)\subseteq \{\ell+j+1, \dots, r\}$ and, as each
monochromatic component is a clique, $\col(xz)\cap \col(yz)=\emptyset$.
However, $|\col(xz)|\geq t$ and $|\col(yz)|\geq t$, so $2t\leq r-\ell-j$, i.e., 
$t\leq \lceil\frac{r-t-\ell}{2}\rceil$ or equivalently 
$r\geq 3t+\ell-1\geq 4t$, a contradiction.
\end{proof}

It remains to prove the case $r=4t-1$ and $|\col(xy)|=t$ for each $x\ne y$.
Let $k$ be the largest integer $j$ such that there is a triangle in $G$ with
$j$ colors occurring on all three edges. 
Let $xyz$ be a triangle with $k$ common colors on its edges.
Let us introduce some further notations.

Let $K=\col(xy)\cap\col(yz)\cap\col(zx)$
and $X=\col(yz) - K$
and $Y=\col(xz) - K$
and $Z=\col(xy) - K$, finally let 
$S=[r]- (K\cup X\cup Y\cup Z)$.

\noindent
For a set $A$, $A'$ always denotes a subset of $A$. Moreover, we denote $A- A'$ by $A''$. Note that $|K|=k$ and $|X|=|Y|=|Z|=t-k$.

\bigskip

\par \textbf{Case 0: } $k=0$.

Now $|V(G)|\le r+1$ as no two incident edges may have the same color. 
Let $V(G)=\{u_1,\ldots,u_n\}$, where $n\le r+1$, and let $c_i\in \col(u_{2i-1}u_{2i})$ for $1\le i\le n/2$.
If $n$ is even, then consider $\cC(u_1,c_1)\cup\cC(u_3,c_2)\cup\ldots\cup\cC(u_{n-1},c_{n/2})$, otherwise  consider $\cC(u_1,c_1)\cup\cC(u_3,c_2)\cup\ldots\cup\cC(u_{n-2},c_{(n-1)/2})\cup\cC(u_n,c)$, where $c$ is an arbitrary color of an edge incident to $u_n$. These (at most $\lfloor(n+1)/2\rfloor\le \lfloor(r+2)/2\rfloor$) monochromatic components obviously cover $V(G)$, and $\lfloor(r+2)/2\rfloor\le r-t$ as $r=4t-1$. 

\bigskip

\par \textbf{Case 1: } $0<3k\le t$.

Choose $Y'\subseteq Y$ and $Z'\subseteq Z$ so that $|Y'|+|Z'|=t+k-1$. This is
possible, since $|Y|+|Z|= 2t-2k \geq t+k$ because $t\geq 3k$.

Take the following monochromatic components:
$\cC(x,K\cup Y\cup Z)\cup \cC(y,Y')\cup \cC(z,Z')$.
The number of components chosen is at most $(2t-k)+(t+k-1)=3t-1=r-t$.

\begin{claim}
	The components $\cC(x,K\cup Y\cup Z)\cup \cC(y,Y')\cup \cC(z,Z')$ cover each vertex.
\end{claim}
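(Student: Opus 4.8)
The plan is to prove the Claim by contradiction: assume some vertex $v$ is covered by none of the listed monochromatic components, and deduce $k\le 0$, contradicting the Case~1 hypothesis $0<3k\le t$. First I would note that $v\notin\{x,y,z\}$. Indeed $x\in\cC(x,K\cup Y\cup Z)$, and since $|Y'|,|Z'|\ge 2k-1\ge 1$ (because $|Y'|=(t+k-1)-|Z'|\ge (t+k-1)-|Z|=2k-1$, and symmetrically for $|Z'|$), both $Y'$ and $Z'$ are nonempty, so $y\in\cC(y,Y')$ and $z\in\cC(z,Z')$. Hence $A:=\col(xv)$, $B:=\col(yv)$, $C:=\col(zv)$ are each sets of exactly $t$ colours. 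The three ``$v$ is not covered'' conditions translate directly into
\[
A\subseteq X\cup S,\qquad B\cap Y'=\emptyset,\qquad C\cap Z'=\emptyset .
\]

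Next I would pin down where $B$ and $C$ live, using transitivity of the colouring on the triangles spanned by $\{x,y,v\}$, $\{x,z,v\}$ and $\{y,z,v\}$. On $\{x,y,v\}$, taking $v$ as the ``middle'' vertex gives $A\cap B\subseteq\col(xy)=K\cup Z$; as $A\subseteq X\cup S$ is disjoint from $K\cup Z$, this forces $A\cap B=\emptyset$. Taking $y$ as the middle vertex gives $\col(xy)\cap B\subseteq A$, hence $\col(xy)\cap B\subseteq A\cap B=\emptyset$, i.e.\ $B\cap(K\cup Z)=\emptyset$; together with $B\cap Y'=\emptyset$ this yields $B\subseteq X\cup Y''\cup S$. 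Symmetrically $A\cap C=\emptyset$ and $C\subseteq X\cup Z''\cup S$. Finally, on $\{y,z,v\}$ with $v$ as the middle vertex, $B\cap C\subseteq\col(yz)=K\cup X$, and since $B\cap K=\emptyset$ this sharpens to $B\cap C\subseteq X$.

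The key step is to invoke the maximality of $k$ via the auxiliary triangle on $\{v,y,z\}$: the set of colours appearing on all three of its edges is $B\cap C\cap\col(yz)=B\cap C$ (using $B\cap C\subseteq X\subseteq\col(yz)$), so by the choice of $xyz$ we get $|B\cap C|\le k$, hence $|B\cup C|\ge 2t-k$. On the other hand $A\cup B\cup C\subseteq X\cup Y''\cup Z''\cup S$, and $A$ (of size $t$) is disjoint from $B\cup C$, so
\[
t+|B\cup C|\;\le\;|X\cup Y''\cup Z''\cup S|\;=\;(t-k)+(t-3k+1)+(t-1+2k)\;=\;3t-2k ,
\]
where the middle summand is $|Y''|+|Z''|=2(t-k)-(|Y'|+|Z'|)=2(t-k)-(t+k-1)=t-3k+1$. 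Thus $|B\cup C|\le 2t-2k$, and comparing with $|B\cup C|\ge 2t-k$ gives $k\le 0$ — a contradiction.

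I expect the genuine difficulty to lie in this last step rather than in the (routine) transitivity manipulations and cardinality bookkeeping: the plain counting bound by itself is too weak, since it merely reproduces the Case~1 inequality $3k\le t$ with no slack. What rescues the argument is (i) the maximality in the choice of the triangle $xyz$, applied to the new triangle $\{v,y,z\}$, and (ii) the observation that $B\cap C$ is contained in $X$, so that it counts fully toward the common-colour quantity of that auxiliary triangle.
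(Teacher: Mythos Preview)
Your proof is correct and follows essentially the same route as the paper's: both reduce to the containments $A\subseteq X\cup S$, $B\subseteq X\cup Y''\cup S$, $C\subseteq X\cup Z''\cup S$, and both use the maximality of $k$ on the auxiliary triangle $vyz$ to bound $|B\cap C|\le k$. The only difference is bookkeeping: the paper counts colours on $xv$, $yv$, $zv$ with multiplicity (bounding the $X$-contribution by $|X|+k=t$ and the $S$, $Y''$, $Z''$ contributions by single occurrences), arriving at $3t\le |S|+|Y''|+|Z''|+t=3t-k$; you instead bound $|A\cup B\cup C|$ as a set and use $A\cap(B\cup C)=\emptyset$ together with $|B\cup C|\ge 2t-k$ to reach the same contradiction $k\le 0$. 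Your detour showing $|Y'|,|Z'|\ge 2k-1\ge 1$ to rule out $v\in\{x,y,z\}$ is unnecessary (since $k>0$, the component $\cC(x,K)$ already contains $x$, $y$, and $z$), but harmless.
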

\begin{proof}
Suppose that a vertex $w$ is not covered. 
Then $\bigl(\col(xw)\cup\col(yw)\cup\col(zw)\bigr)\cap K=\emptyset$.
Similarly, $\col(xw)\cap Y=\emptyset$ and $\col(yw)\cap Y'=\emptyset$.
 
We claim that also $\col(zw)\cap Y=\emptyset$. Indeed, as $Y\subseteq \col(xz)$, if $zw$ had a color from $Y$, then $xw$
would also have that color (since the coloring is transitive), a contradiction.

By the same reasoning, $\col(xw)\cap Z=\emptyset$, $\;\col(yw)\cap Z=\emptyset$ and $\col(zw)\cap Z'=\emptyset$.

As a consequence, $\col(xw)\subseteq X\cup S$, $\;\col(yw)\subseteq X\cup Y''\cup S$ and $\col(zw)\subseteq X\cup Z''\cup S$.

Next we claim that the colors in $X$ can occur altogether (counting with multiplicity) at most $t$ times on the edges $xw, yw$ and $zw$. 
Let $c\in X$ be a color. If it occurs more than once on edges $xw, yw$ and $zw$, then it is in $\col(yw)\cap\col(zw)$ but $c\not\in\col(xw)$. To see this, note that if $c\in\col(xw)\cap\col(yw)$, then $c\in\col(xy)$ contradicting $X\cap\col(xy)=\emptyset$; similarly $c\not\in\col(xw)\cap\col(zw)$. 
By the choice of $k$, $|\col(yw)\cap\col(zw)|\le k$. Hence the colors in $X$ occur at most $|X|+k\leq t$ times on the edges $xw, yw$ and $zw$.

Each color in $S$ can only occur once on $xw, yw$ and $zw$, since by color-transitivity, a color occurring on at least two of the edges $xw, yw$ and $zw$ would also occur on one of the edges $xy, yz$ and $zx$, and that would contradict the definition of $S$.

 Hence counting the colors of the edges $xw, yw$ and $zw$:
 $3t \le |S|+|Z''|+|Y''|+t=|S|+(|Y|+|Z|-(|Y'|+|Z'|))+t=(4t-1-(3t-2k))+(2t-2k-(t+k-1))+t=(t+2k-1)+(t-3k+1)+t=3t-k$,  which is a contradiction.
 \end{proof}

 \bigskip

 \par \textbf{Case 2: } $3k>t$.

 If $|X|+|Y|+|Z|= 3t-3k \geq 2k-1$, then choose $X'\subseteq X$, $Y'\subseteq Y$ and $Z'\subseteq Z$ so that $|X'|+|Y'|+|Z'|= 2k-1$.
 If $|X|+|Y|+|Z|= 3t-3k < 2k-1$, then let $X'=X$, $Y'=Y$ and $Z'=Z$.

 Take the following monochromatic components:
 $\cC(x,K\cup X'\cup Y\cup Z)\cup \cC(y,Y')\cup \cC(z,X\cup Z')$.
 The number of components chosen is at most 
 $|K|+|X|+|Y|+|Z|+|X'|+|Y'|+|Z'|\leq k+3(t-k)+2k-1=3t-1= r-t$.

 We claim that the components chosen cover each vertex.
 Suppose that there is a vertex $w$ which is not covered.
 Similarly to the previous case, it is easy to prove that the colors of $xw, yw$ and $zw$
 are all from $S\cup X''\cup Y''\cup Z''$, and each color is used at most once altogether on these three edges. Hence $3t\leq |S|+|X''|+|Y''|+|Z''|$.

 If $3t-3k\geq 2k-1$, then $3t\leq |S|+|X''|+|Y''|+|Z''|=4t-1-(|K|+|X'|+|Y'|+|Z'|)=4t-1-(k+2k-1)=4t-3k<3t$ since $t<3k$. This is a contradiction.

 If $3t-3k < 2k-1$, then $3t\leq |S\cup X''\cup Y''\cup Z''|=|S|=4t-1-(3t-2k)=t+2k-1$. But this implies $2t\leq  2k-1$, hence $k>t$, which  contradicts the assumption that each edge has exactly $t$ colors.
This finishes the proof of Theorem \ref{thm:t-metszo_eros}.
\end{proof}

\begin{remark}
  We think that with a more diversified case analysis, Theorem \ref{thm:t-metszo_eros} can be extended to the case $t\ge r/5$. Note however, that the case $t=r/6$ would include the first unsolved case of Ryser's conjecture for intersecting hypergraphs.
\end{remark}

\section{Covering large fraction by few monochromatic components}

In this section, we give a sharp bound for the ratio of vertices that can be 
covered by $r-1$ monochromatic components of pairwise different colors in an
$r$-edge colored complete graph. By Remark \ref{remark_multi2}, we can assume that the monochromatic components of the graph are cliques, since in
the color-transitive closure of a graph, the monochromatic components have the same vertex sets as in the original graph.

\begin{theorem}\label{thm:cover_diff}
  Let $G$ be a multi $r$-edge-colored complete graph on $n$ vertices.
  Then at least 
   $\big(1-\frac{r-2}{(r-1)^2}\big)\cdot n$ vertices of
   $G$ can be covered by
   $r-1$ monochromatic components of pairwise different colors, and this bound
   is sharp for infinitely many values of $r$. Moreover, the $r-1$ monochromatic components can be chosen so that their intersection is nonempty.
\end{theorem}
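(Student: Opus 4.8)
The plan is to prove the (formally stronger) statement in which the $r-1$ components all pass through a common vertex, which makes the ``moreover'' clause automatic. By Remark~\ref{remark_multi2} we may assume every monochromatic component is a clique, and — as in the opening reduction of the proof of Theorem~\ref{thm:t-metszo_eros} — that $|\col(uv)|<r$ for every pair of vertices. Writing $C_j(x)$ for the colour-$j$ clique containing $x$, a family of $r-1$ monochromatic components of pairwise different colours with a common vertex $x$ is exactly $\{C_j(x):j\in[r]\setminus\{i\}\}$ for some colour $i$, and the set of vertices it fails to cover is precisely $U_i(x):=\{y:\col(xy)=\{i\}\}$. So everything reduces to finding $x$ and $i$ with $|U_i(x)|\le\kappa n$, where $\kappa:=\frac{r-2}{(r-1)^2}$. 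I record three facts: (i) $U_i(x)\subseteq C_i(x)$, hence $|U_i(x)|\le|C_i(x)|-1$; (ii) if some colour occurs on every edge, a single component covers $V$ and we are done, so we may assume every colour-class of $\col$ consists of at least two cliques; (iii) if $B\ne B'$ are two cliques of colour $i$ and $x\in B$, then the sets $C_j(x)\cap B'$ $(j\ne i)$ cover $B'$, and each $C_j(x)\cap B'$ lies inside both a colour-$i$ and a colour-$j$ clique, so it spans a clique whose every edge is multicoloured. (Warm-up: if there are no multicoloured edges, (iii) forces $|B'|=\sum_{j\ne i}|C_j(x)\cap B'|$ with every term $\le1$, so all cliques have size $\le r-1$; this settles the case $n\ge(r-1)^2$ when $M=0$, and this is the phenomenon the general argument must quantify.)

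Let $E_1$ be the number of edges $uv$ with $|\col(uv)|=1$. A single-coloured edge of colour $i$ is counted once in $U_i(u)$ and once in $U_i(v)$, so $\sum_x\sum_i|U_i(x)|=2E_1$ and the average of $|U_i(x)|$ over the $rn$ pairs $(x,i)$ equals $\frac{2E_1}{rn}$. Hence if $E_1\le\frac{r\kappa}{2}n^2$ we are done; in particular this disposes of $n\le(r-1)^2$, since then $E_1\le\binom n2\le\frac{r\kappa}{2}n^2$.

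It remains to treat $E_1>\frac{r\kappa}{2}n^2$, equivalently, the number $M=\binom n2-E_1$ of multicoloured edges satisfies $M<\frac n2\bigl(\frac{n}{(r-1)^2}-1\bigr)$, which forces $n>(r-1)^2$. If some monochromatic clique has at most $\kappa n+1$ vertices we finish via (i); so assume every clique has more than $\kappa n+1>r-1$ vertices. Since the colour-$i$ cliques partition $V$, each colour then has fewer than $1/\kappa=r+\frac1{r-2}$, hence at most $r$, cliques. I now aim for a contradiction by forcing $M$ to be large. For each colour $i$ and each colour-$i$ clique $B'$ (it has a sibling by (ii)) and each centre $x$ in a sibling clique, (iii) writes $B'$ as the union of the $\le r-1$ sets $C_j(x)\cap B'$, one of which has at least $|B'|/(r-1)$ vertices, so $B'$ contains at least $\binom{\lceil|B'|/(r-1)\rceil}{2}$ multicoloured edges; and by choosing the centre $x$ so as to minimise the number of multicoloured edges joining $x$ to $B'$ (which is small on average precisely because $M$ is small), the sets $C_j(x)\cap B'$ become nearly disjoint, and a convexity estimate upgrades the count to $\Omega\bigl(|B'|^2/(r-1)\bigr)$ multicoloured edges inside $B'$. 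Summing over all cliques of all colours — noting a multicoloured edge with colour set $S$ lies inside exactly $|S|\le r-1$ monochromatic cliques — produces a lower bound on $M$ that, for $n>(r-1)^2$, exceeds $\frac n2\bigl(\frac{n}{(r-1)^2}-1\bigr)$, the desired contradiction. This last step is the main obstacle: the crude bound gives only $\Omega\bigl(|B'|^2/(r-1)^2\bigr)$ multicoloured edges inside $B'$, a factor $r-1$ too weak, and closing this gap is exactly where one must play off the scarcity of multicoloured edges (which forces the covering sets $C_j(x)\cap B'$ to be a near-partition of $B'$) against the ``excess'' contributed by multicoloured edges incident to the chosen centre; the bookkeeping here is also what pins down the extremal configuration to be balanced.

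For sharpness, let $r-1=q$ be a prime power and let $\cH$ be obtained from a projective plane of order $q$ by deleting one point $p$ and all lines through it: the $r$ lines through $p$ become the classes, the $q^2$ lines avoiding $p$ become the hyperedges, and $\cH$ is intersecting, $r$-uniform and $r$-partite with $n=q^2$. A choice of $r-1$ monochromatic components of pairwise distinct colours is a choice of points $z_j$ ($j$ ranging over an $(r-1)$-subset $S$ of the classes) with $z_j\ne p$ in class $j$, and a hyperedge $\ell$ is covered iff $z_j\in\ell$ for some $j$. Setting $w(\ell):=|\{j\in S:z_j\in\ell\}|$, one has $\sum_\ell w(\ell)=(r-1)q=q^2$, $\sum_\ell\binom{w(\ell)}{2}=\binom{r-1}{2}=\binom q2$ (each pair $z_j,z_{j'}$ lies on a unique hyperedge, as the line $z_jz_{j'}$ avoids $p$), and $w(\ell)\le q$; maximising the number $\#\{\ell:w(\ell)\ge1\}$ of covered hyperedges subject to these two linear constraints is a small linear program with optimum $q^2-q+1$, attained exactly when the $z_j$ are collinear. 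So at least $q-1=\kappa n$ hyperedges are always missed, and $(1-\kappa)n$ is best possible for each such $r$. Finally, the ``moreover'' clause requires nothing extra, since all the components produced above contain the chosen vertex $x$.
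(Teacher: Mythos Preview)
Your averaging identity $\sum_{x}\sum_i|U_i(x)|=2E_1$ and the consequent reduction to $E_1\le\tfrac{r\kappa}{2}n^2$ are exactly the paper's Case~2 argument. The difficulty is your treatment of the complementary case $E_1>\tfrac{r\kappa}{2}n^2$: you outline a plan to lower-bound the number $M$ of multicoloured edges by counting them inside cliques, but you yourself flag that ``the crude bound gives only $\Omega(|B'|^2/(r-1)^2)$ \dots\ a factor $r-1$ too weak,'' and the promised upgrade via a well-chosen centre $x$ is never carried out. As written this is a genuine gap, not a routine omission: it is not clear that the bookkeeping you describe closes the factor of $r-1$, and nothing in your argument forces $k_i\le r-1$ (you only get $k_i\le r$), which is exactly the hypothesis under which the $M_i$-bound becomes sharp enough.

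The paper avoids this entirely by splitting on $k_1=\max_i k_i$ rather than on $E_1$. If $k_1\ge r-1$, one computes $\sum_{C\in\mathcal C_1,\,C'\in\mathcal C_r}|C\setminus C'|=(k_r-1)n$, so some pair has $|C_1\setminus C'_r|\le\frac{k_r-1}{k_1k_r}\,n\le\frac{r-2}{(r-1)^2}\,n$; any $x\in C_1\cap C'_r$ then gives $\cC(x,[r]\setminus\{1\})$ missing only $C_1\setminus C'_r$. If instead $k_i\le r-1$ for all $i$, the AM--QM inequality yields $M_i\ge\frac{n^2}{2(r-1)}-\frac n2$, whence $M\ge\frac{1}{r-1}\bigl(\sum_i M_i-\binom n2\bigr)\ge\frac{n^2}{2(r-1)^2}-\frac n2$, and therefore $E_1=\binom n2-M\le\frac{r(r-2)}{2(r-1)^2}n^2$, which is precisely your averaging threshold. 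So the case $E_1>\tfrac{r\kappa}{2}n^2$ simply cannot occur once $k_1\le r-1$, and when $k_1\ge r-1$ the two-colour component count finishes in one line; no multicoloured-edge bookkeeping is needed.

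Your sharpness argument via the truncated projective plane is correct and pleasant: with $a_k$ the number of hyperedges $\ell$ having $w(\ell)=k$, the constraints $\sum_k(k-1)a_k=0$ and $\sum_k k(k-1)a_k=q(q-1)$ together with $k\le q$ give $q(q-1)\le q\sum_{k\ge2}(k-1)a_k=q\,a_0$, so $a_0\ge q-1$. The paper instead observes directly that any two components of different colours meet in exactly $b$ vertices, so $r-1$ components of distinct colours cover at most $b\bigl((r-1)^2-(r-2)\bigr)$ vertices; both arguments give the same bound.
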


Applying the construction of Gy\'arf\'as (Definition
\ref{def:Gyarfas_constr}), we get the following equivalent statement for hypergraphs.

\begin{theorem}\label{thm:cover_diff_hyp}
   Let $\cH$ be an $r$-partite $r$-uniform intersecting hypergraph.
   Then at least 
   $\big(1-\frac{r-2}{(r-1)^2}\big)\cdot |E(\cH)|$ hyperedges of
   $\cH$ can be covered by a multi-colored set of size
   $r-1$, and this bound
   is sharp for infinitely many values of $r$. Moreover, the cover can be
   chosen so that it is a subset of some hyperedge of $\cH$.
\end{theorem}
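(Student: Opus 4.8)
The plan is to prove Theorem~\ref{thm:cover_diff} directly in the graph language, since Theorem~\ref{thm:cover_diff_hyp} then follows by Gy\'arf\'as' construction (Definition~\ref{def:Gyarfas_constr}), with the ``subset of some hyperedge'' condition corresponding exactly to the ``nonempty intersection'' condition on the chosen monochromatic components. By Remark~\ref{remark_multi2} we may assume the coloring is transitive, i.e.\ every monochromatic component of color $i$ is an $i$-colored clique and the $i$-colored cliques partition $V(G)$.

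\medskip

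\noindent\textbf{The lower bound.} Fix any vertex $v$. For each color $i\in[r]$, let $A_i$ be the monochromatic component of color $i$ containing $v$; these $r$ components all pass through $v$, so any subcollection has nonempty intersection. Since for every $u\neq v$ we have $\col(uv)\neq\emptyset$, the union $\bigcup_{i=1}^r A_i$ is all of $V(G)$. The idea is to drop the color $i$ whose component $A_i$ is ``least useful'' and show the remaining $r-1$ still cover a large fraction. Write $a_i=|A_i\setminus\bigcup_{j\neq i}A_j|$ for the number of vertices covered \emph{only} by $A_i$; then deleting color $i$ loses exactly $a_i$ vertices, and $\sum_i a_i\le n$, so the minimum $a_i$ is at most $n/r$ --- but this only gives the weaker bound $(1-\tfrac1r)n$. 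To get $\big(1-\frac{r-2}{(r-1)^2}\big)n$ one must argue more carefully: a vertex $u$ counted in $a_i$ satisfies $\col(uv)=\{i\}$, and for two such vertices $u\in a_i$, $u'\in a_j$ with $i\neq j$ we have $i\in\col(uv)$, $j\in\col(u'v)$, hence by transitivity $\col(uu')$ cannot contain $i$ or $j$ unless $u$ or $u'$ lies in two of the $A$'s --- contradiction. So the sets of ``privately covered'' vertices, together with $v$, form a configuration constraining the $a_i$'s. The sharp extremal configuration (truncated projective plane of order $q=r-1$, each line blown up into $b$ parallel copies) suggests that the right argument chooses $v$ cleverly --- e.g.\ as a vertex of $\cH$ / a clique rather than an arbitrary vertex of $G$ --- and balances: in the extremal example each of the $r$ ``bundles through a point'' has size $bn/((r-1)^2+ (r-1))$-ish, and dropping one bundle loses a $\frac{r-2}{(r-1)^2}$ fraction. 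I would set up the counting by choosing $v$ to minimize the size of its largest incident monochromatic component (or to lie in a smallest maximal monochromatic clique), then bound $\min_i a_i$ using the pairwise-disjointness of the private sets across colors and the fact that each private set has size at most that of the corresponding component minus the overlaps.

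\medskip

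\noindent\textbf{Sharpness.} For $r-1=q$ a prime power, take the truncated projective plane $\mathrm{TPP}(q)$: a projective plane of order $q$ with all lines through a fixed point deleted, viewed as an $r$-uniform $r$-partite intersecting hypergraph (each remaining line has $q+1=r$ points, and the $r$ parallel classes of the truncated affine structure give the $r$ partite classes). This has $q^2=(r-1)^2$ hyperedges, $\nu=1$, and $\tau=r-1$; one checks that any multi-colored set of size $r-1$ misses at least $q-1=r-2$ hyperedges, i.e.\ covers at most $(r-1)^2-(r-2)$ of the $(r-1)^2$ lines, giving ratio $1-\frac{r-2}{(r-1)^2}$. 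Blowing up each hyperedge into $b$ parallel copies scales both counts by $b$ and preserves the ratio, which handles infinitely many point-counts for each such $r$. Finally, to see these are the \emph{only} extremal hypergraphs one traces the equality case of the counting argument: equality forces every $\col(uv)$ to be a singleton, every monochromatic component through the optimally-chosen vertex to have the same size, and the incidence structure to be a projective plane, which pins it down to a blown-up $\mathrm{TPP}(q)$.

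\medskip

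\noindent The main obstacle I expect is the lower bound: the naive ``drop the smallest private set'' gives only $1-1/r$, and closing the gap to $1-\frac{r-2}{(r-1)^2}$ requires exploiting the transitivity of the coloring to show the $r$ private sets $A_i\setminus\bigcup_{j\ne i}A_j$ cannot all be simultaneously large --- essentially that if many of them were large one could reconstruct a projective-plane-like incidence structure forcing some component through $v$ to be huge, which one then uses either to get a better-than-average drop or to recurse. Getting the exact constant (rather than something of the form $1-\Theta(1/r^2)$ with a worse leading term) is the delicate point, and it is presumably where the choice of $v$ and a weighted/averaging version of the count over all vertices $v$ does the work.
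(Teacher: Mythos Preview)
Your proposal correctly identifies the relevant quantity --- your $a_i$ is exactly the paper's $d_i(v)=|\{u:\col(uv)=\{i\}\}|$ --- and the sharpness construction via truncated projective planes is right. But the lower bound is not actually proved: you only obtain $\min_i a_i\le n/r$ and then speculate about how one might close the gap. The speculation does not turn into an argument, and in fact the route you sketch (``choose $v$ cleverly, recurse, reconstruct a projective-plane-like structure'') is not how the bound is obtained.

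The paper's proof has two genuinely different ingredients that your proposal lacks. First, a case split on $k_1=\max_i k_i$, where $k_i$ is the number of monochromatic components of color $i$. When $k_1\ge r-1$, one does \emph{not} argue about the components through a single vertex; instead one uses the identity
\[
\sum_{C\in\mathcal C_1,\;C'\in\mathcal C_r}|C\setminus C'|=(k_r-1)n
\]
to find $C_1\in\mathcal C_1$, $C'_r\in\mathcal C_r$ with $|C_1\setminus C'_r|\le\frac{k_r-1}{k_1k_r}n\le\frac{r-2}{(r-1)^2}n$, checks $C_1\cap C'_r\ne\emptyset$, picks $x$ there, and takes $\cC(x,[r]\setminus\{1\})$. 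Second, when $k_i\le r-1$ for all $i$, one does average over $v$ as you suggest, but the point is \emph{how}: writing $m_i$ for the number of edges with $\col=\{i\}$ and $M_i$ for the number of edges having color $i$, one has $\sum_v d_i(v)=2m_i$ and $\sum_i m_i=\binom{n}{2}-t$ where $t$ counts multi-colored edges; then $t\ge\frac{1}{r-1}\bigl(\sum_i M_i-\binom{n}{2}\bigr)$, and the hypothesis $k_i\le r-1$ plus the AM--QM inequality gives $M_i\ge\frac{n^2}{2(r-1)}-\frac{n}{2}$. Chaining these yields $\sum_i m_i\le\frac{r(r-2)}{2(r-1)^2}n^2$, so some $m_i\le\frac{r-2}{2(r-1)^2}n^2$ and hence some $d_i(v)\le\frac{r-2}{(r-1)^2}n$. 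The case split is essential: the AM--QM step needs $k_i\le r-1$, and without Case~1 the argument collapses when some color has many components.
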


The following strengthening of Ryser's conjecture was phrased by Aharoni et al. \cite[Conjecture 3.1]{ABW}: ``In an intersecting $r$-partite $r$-uniform hypergraph $\cH$, there exists a class of size $r-1$ or less, or a
cover of the form $e- \{x\}$ for some $e \in E$ and $x \in e$.''
This conjecture was disproved in \cite{FHMW}. Note however, that by Theorem
\ref{thm:cover_diff_hyp}, if we require the cover to be multi-colored,
then additionally requiring
it to be a subset of a hyperedge does not decrease the number of
coverable hyperedges in the worst case.

We call the reader's attention to the fact that, although our result is sharp
for infinitely many values of $r$, in all our examples showing sharpness every
class has exactly $r-1$ vertices, thus they are far from exhibiting a
counterexample to Ryser's conjecture. 

\begin{proof}[Proof of Theorem \ref{thm:cover_diff}]
  We call an edge-coloring of $G$ \emph{spanning} if for every color $c$ and vertex $u$ there is an edge $uv$ of $G$ such that $c\in\col(uv)$.
  If the edge-coloring of $G$ is not spanning, then we can cover all the vertices of $G$ by $r-1$ monochromatic components of pairwise different colors. Indeed, if there is a vertex $v$ and a color $i$ such that no edge incident to $v$ has color $i$, then $\cC(v,[r]-\{i\})$ covers the vertices of $G$.

  Now suppose that the coloring of $G$ is spanning. For $r=2$ we can cover the vertex set by one  monochromatic component
  by a well-known folklore observation, so we 
  may assume $r\ge 3$.
  Let the number of monochromatic components of color $i$ be $k_i$. Let us
  denote the set of monochromatic components of color $i$ by
  $\mathcal{C}_i$. We may suppose
  that $k_1\ge k_2\ge\ldots\ge k_r\ge 2$, otherwise (if $k_r=1$) we are
  done. In the following proof, we will think of monochromatic components as vertex sets, hence when we write $C\in \mathcal{C}_i$, we mean that $C$ is the vertex set of a monochromatic component of color $i$.

\par \textbf{Case 1: } $k_1\ge r-1$.
We have

\begin{equation} \label{eq:komp_kul}
\sum_{C\in \mathcal{C}_1,\; C'\in \mathcal{C}_r} |C-C'|=(k_r-1)\cdot n,
\end{equation}
since each vertex occurs in exactly one component of color $r$ and one
component of color 1. Hence each vertex is counted $k_r-1$ times for the
$k_r-1$ components of color $r$ that does not contain it. 

 From (\ref{eq:komp_kul}) it follows that among the $k_1\cdot k_r$ sets
$\{C-C': C\in \mathcal{C}_1,C'\in\mathcal{C}_r\}$,
there is one which has size at most
$\frac{k_r-1}{k_1\cdot  k_r}\cdot n$.
Let $C_1-C'_r$ be such a set with minimum cardinality. As $k_1\ge k_r$ we have
$\frac{k_r-1}{k_r}\le\frac{k_1-1}{k_1}$, so
$\frac{k_r-1}{k_1\cdot k_r}\cdot n\le \frac{k_1-1}{k_1^2}\cdot n$.
Using $2\le r-1\le k_1$ we  also have
$\frac{k_1-1}{k_1^2}\le \frac{r-2}{(r-1)^2}$, so
$\frac{k_r-1}{k_1\cdot  k_r}\cdot n\le \frac{r-2}{(r-1)^2}\cdot n$.

We claim that $C_1\cap C'_r\ne\emptyset$. Indeed, take a vertex $x\in C_1$. If
$C_1\cap C'_r=\emptyset$, then $|C_1 - \cC(x,\{r\})|<|C_1|=|C_1 - C'_r|$ which
contradicts the minimality of $C_1-C'_r$. Thus we can choose a vertex $x$ in
$C_1\cap C'_r$. Take $\cC(x,[r]-\{1\})$.  These components cover each
vertex outside $C_1-C'_r$, hence at least $(1-\frac{r-2}{(r-1)^2})\cdot n$
vertices.

\par \textbf{Case 2: } $k_1\le r-1$ (i.e., $k_i\le r-1$ for all $i$).

Notice that Case 1 and Case 2 overlap. However, this overlapping categorization will be convenient when examining sharpness.

For a vertex $v$ and a color $i\in [r]$, let $d_i(v)=|\{u\in V(G): \col(uv)=\{i\}\}|$, i.e., the number of neighbors of $v$ that are connected to $v$ by an edge having only color $i$.
It is enough to show that there exists $v\in V$ and $i \in \{1,\dots , r\}$ such that 
$d_i(v)\leq \frac{r-2}{(r-1)^2}\cdot n$.
Indeed, in this case $\cC(v,[r]-\{i\})$ cover each vertex except those that are connected to $v$ by an edge of 
unique color $i$, that is, at most $\frac{r-2}{(r-1)^2}\cdot n$ vertices are uncovered.

Let $m_i=|\{uv\in E(G): \col(uv)=\{i\}\}|$, 
and $M_i=|\{uv\in E(G): i\in \col(uv)\}|$. 
Since 
$\sum_{v\in V}d_i(v)=2m_i$, it is enough to show that there exists a color 
$i$ such that $m_i\leq \frac{r-2}{2(r-1)^2}\cdot n^2$.
For this, it is enough to show that $\sum_{i=1}^r m_i\leq \frac{r(r-2)}{2(r-1)^2}\cdot n^2$.
We have $\sum_{i=1}^r m_i = \binom{n}{2} -t$ where $t$ denotes the number of edges having multiple colors.

It is not hard to see that
$t\geq \frac{1}{r-1}\cdot \big[\sum_{i=1}^r M_i-\binom{n}{2}\big]$, since each
edge has at most $r$ colors. 

\begin{claim} \label{cl:M_i bound}
  If $\ell=k_i\le r-1$, then $M_i\geq \frac{n^2}{2\ell}-\frac{n}{2}
  \geq \frac{n^2}{2(r-1)}-\frac{n}{2}$.
\end{claim}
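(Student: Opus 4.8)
The plan is to exploit the fact that, after passing to the color-transitive closure (Remark \ref{remark_multi2}), the edges of color $i$ are exactly the edges lying inside the monochromatic components of color $i$, and that these components partition $V(G)$ into $\ell = k_i$ cliques. Writing $C_1,\dots,C_\ell$ for the vertex sets of these components and $n_j = |C_j|$, we have $\sum_{j=1}^\ell n_j = n$. Every edge counted by $M_i$ lies inside exactly one $C_j$, and conversely every edge inside some $C_j$ receives color $i$ by transitivity, so
\[
M_i = \sum_{j=1}^\ell \binom{n_j}{2}.
\]

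The second step is a convexity estimate. Expanding,
\[
M_i = \sum_{j=1}^\ell \binom{n_j}{2} = \frac12\left(\sum_{j=1}^\ell n_j^2 - \sum_{j=1}^\ell n_j\right) = \frac12\left(\sum_{j=1}^\ell n_j^2 - n\right),
\]
and by the Cauchy--Schwarz inequality (equivalently, the quadratic-mean/arithmetic-mean inequality), $\sum_{j=1}^\ell n_j^2 \ge \frac{1}{\ell}\bigl(\sum_{j=1}^\ell n_j\bigr)^2 = \frac{n^2}{\ell}$. Hence $M_i \ge \frac{n^2}{2\ell} - \frac{n}{2}$. Finally, since $\ell = k_i \le r-1$ we have $\frac{n^2}{2\ell} \ge \frac{n^2}{2(r-1)}$, which yields the stated second inequality.

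I do not expect a genuine obstacle here: the only points requiring care are that the color-$i$ components really are pairwise disjoint cliques covering all of $V(G)$ and that $M_i$ counts precisely the edges inside them, and both follow immediately from the transitivity of the coloring (which is exactly why we reduced to the color-transitive closure at the start of the section). Everything else is the elementary convexity bound above.
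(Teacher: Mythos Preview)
Your proof is correct and essentially identical to the paper's: both write $M_i=\sum_j\binom{n_j}{2}=\tfrac12\bigl(\sum_j n_j^2-n\bigr)$ for the component sizes $n_j$ and then apply the QM--AM (equivalently Cauchy--Schwarz) inequality to get $\sum_j n_j^2\ge n^2/\ell$. The only difference is that you spell out why $M_i$ equals the sum of intra-component edge counts via transitivity, which the paper leaves implicit.
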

\begin{proof}
Let the cardinalities of the components of color $i$ be $\gamma_1,\dots,
\gamma_\ell$. 
Then $M_i=\binom{\gamma_1}{2} + \dots + \binom{\gamma_\ell}{2}=
\frac{\gamma_1^2+ \dots +\gamma_\ell^2}{2}-\frac{\gamma_1+ \dots +\gamma_\ell}{2}=
\frac{\gamma_1^2+ \dots +\gamma_\ell^2}{2}-\frac{n}{2}$.

Now it is enough to show that $\frac{\gamma_1^2+ \dots +\gamma_\ell^2}{2}\geq
\frac{n^2}{2\ell}$ 
but this follows from the Arithmetic Mean--Quadratic Mean Inequality.
\end{proof}

Using the claim, we get that
$t\geq \frac{1}{r-1}\cdot \big[\sum_{i=1}^r M_i-\binom{n}{
    2}\big]\geq \frac{1}{r-1}\cdot \big[\frac{r(n^2-(r-1)n)}{2(r-1)}-\binom{n}{2}\big]=\frac{rn^2-r(r-1)n-(r-1)n^2+(r-1)n}{2(r-1)^2}=
\frac{n^2}{2(r-1)^2}-\frac{n}{2}$.

So $\sum_{i=1}^r m_i= \binom{n}{2}-t\leq \binom{n}{2}-\frac{n^2}{2(r-1)^2}+\frac{n}{2}=
\frac{(r-1)^2 n^2-(r-1)^2 n-n^2+(r-1)^2 n}{2(r-1)^2}=
\frac{r(r-2)n^2}{2(r-1)^2}$.

For the proof of sharpness see Theorem \ref{thm:cover_diff_sharp}.
\end{proof}

\subsection{Characterization of sharp examples}

In this subsection we characterize the sharp examples for Theorem \ref{thm:cover_diff}. 
For this, we will need the definition of an affine plane of order $r-1$.

\begin{defn}\label{def:aff_plane}
	An incidence structure $\mathcal{A}=(\mathcal{P},\mathcal{L})$, where the elements of $\mathcal{P}$ are referred to as the points, and the elements of $\mathcal{L}$ are referred to as the lines is called an \emph{affine plane of order $r-1$} if the following five conditions hold. 
\begin{description}
	\item[(i)] Every pair of points are connected by exactly one line.
	\item[(ii)] For each point $x$ and line $L$ such that $x\notin L$, there exists exactly one line $L'$ such that $x\in L'$, but $L'$ is disjoint from $L$.
	\item[(iii)]  Each line contains at least 2 points.
	\item[(iv)] Each point is incident with at least 3 lines.
	\item[(v)] The maximum number of pairwise parallel lines is $r-1$.
\end{description}
\end{defn}

We also need the following definition.
\begin{defn}
We call a multi edge-colored 
complete graph $G$ the \emph{blowup of an affine plane} if there 
exists an affine plane $\mathcal{A}=(\mathcal{P},\mathcal{L})$, a positive integer $b$ and a function $f:V(G)\rightarrow \mathcal{P}$ such that
\begin{itemize}
	\item the lines of $\mathcal{A}$ are colored such that two lines have the same color if and only if they are disjoint (i.e., parallel),
	\item for each point $p\in \mathcal{P}$, $|\{v\in V(G): f(v)=p\}|=b$
	\item $i\in \col(uv)$ if and only if $f(u)$ and $f(v)$ are incident to a common line of color $i$ (note that this includes the case if $f(u)=f(v)$).
\end{itemize}
\end{defn} 

\begin{theorem}\label{thm:cover_diff_sharp}
For a multi $r$-edge-colored complete graph $G$ on $n$ vertices, the maximum number of vertices coverable by $r-1$ monochromatic components of pairwise different colors equals $\big(1-\frac{r-2}{(r-1)^2}\big)\cdot n$ if and only if $G$ is a blowup of an affine plane.
\end{theorem}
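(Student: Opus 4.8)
The plan is to prove both directions of the equivalence. For the "if" direction, I would start with a blowup $G$ of an affine plane $\mathcal{A}=(\mathcal{P},\mathcal{L})$ of order $r-1$ with blowup parameter $b$, so $n = b|\mathcal{P}| = b(r-1)^2$ (using the standard fact that an affine plane of order $q$ has $q^2$ points, $q^2+q$ lines, $q+1$ parallel classes each of size $q$; here $q=r-1$, and there are $r$ colors). I would then show that any choice of $r-1$ monochromatic components of pairwise different colors misses exactly $b(r-2)$ points, matching $\frac{r-2}{(r-1)^2}n$. The key combinatorial computation: picking colors $i_1,\dots,i_{r-1}$ (omitting one color, say color $r$) and one component (= one line, blown up) from each chosen class, the union of these $r-1$ lines in $\mathcal{A}$ covers all but exactly $r-2$ points — this is because $r-1$ lines from distinct parallel classes pairwise intersect in exactly one point (axiom (i)), and by inclusion–exclusion $|\bigcup_{j} L_{i_j}| = (r-1)^2 - \binom{r-1}{2} + (\text{triple intersections}) - \cdots$; one shows all these lines pass through a common point precisely when the configuration is "central", and in general the count is $(r-1)^2-(r-2)$ with equality achievable, so exactly $b(r-2)$ vertices are uncovered. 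I should also verify the spanning condition holds (each point lies on one line of each parallel class), so we are genuinely in the non-trivial case, and note the $r-1$ chosen components can be made to share a common vertex by taking all lines through a fixed point — giving the "moreover" part.

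For the "only if" direction — the harder direction — I would trace through the proof of Theorem \ref{thm:cover_diff} and extract the equality conditions. Equality forces us into Case 2 with $k_1 \le r-1$ (if $k_1 \ge r$ strictly, Case 1 gives a strict improvement via $\frac{k_1-1}{k_1^2} < \frac{r-2}{(r-1)^2}$), and moreover forces $k_i = r-1$ for every color $i$: the chain of inequalities $t \ge \frac{1}{r-1}[\sum M_i - \binom n2] \ge \dots$ used Claim \ref{cl:M_i bound} with $k_i \le r-1$, and equality in the AM–QM step forces all components of each color to have equal size, namely $\gamma_1 = \cdots = \gamma_{r-1} = n/(r-1)$. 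Equality in $t \ge \frac{1}{r-1}[\sum M_i - \binom n2]$ forces every multi-colored edge to have exactly $r$ colors (i.e., every edge is either uniquely colored or has all $r$ colors — but a vertex pair with all $r$ colors could be contracted as in the proof of Theorem \ref{thm:t-metszo_eros}, reducing $n$; handling this is a point to be careful about, so I would either rule it out or show it descends to the blowup structure with the same parameter). Then, defining $f(v)$ to record, for each vertex $v$, the $r$-tuple of components-of-each-color containing $v$, I would show $f$ maps into a point set $\mathcal{P}$ of size $(r-1)^2$, that each color class of components becomes a parallel class of $r-1$ "lines" each of size $n/(r-1) = b(r-1)$ where $b := n/(r-1)^2$, and that the axioms (i)–(v) of Definition \ref{def:aff_plane} hold.

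The main obstacle is verifying axiom (i) — that every two points of $\mathcal{P}$ lie on a common line — together with the regularity that makes $b$ well-defined (every fiber $f^{-1}(p)$ has the same size $b$). Axiom (i) should follow from: if $u,v$ are vertices with $f(u)\ne f(v)$, then the edge $uv$ has some color $i$ (it is non-empty), so $u,v$ lie in a common $i$-component, i.e., a common line. For the fiber-size regularity, I would use a double-counting / pigeonhole argument on the sizes: each color class partitions $V(G)$ into $r-1$ equal parts of size $b(r-1)$, the common refinement of all $r$ partitions has cells of size equal to the fibers of $f$, and an incidence-counting argument (each line meets each non-parallel line in a constant-size "intersection block", forced by the equality in Theorem \ref{thm:cover_diff}) pins every fiber to size exactly $b$. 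I would also need to check axiom (ii) (the parallel postulate) — given a point $x$ and a line $L$ not through $x$, the unique line through $x$ parallel to $L$ is the $c(L)$-colored component containing $x$, where $c(L)$ is the color of $L$; disjointness from $L$ follows since two components of the same color are disjoint, and uniqueness since $x$ lies on exactly one component of each color. Axioms (iii)–(v) are immediate from $\gamma_j = b(r-1)\ge 2$, the $r\ge 3$ colors through each point, and exactly $r = (r-1)+1$ parallel classes. The last delicate point is confirming that the equality case of the covering bound — not just the counting inequalities in the proof — is genuinely attained only here, i.e., that after establishing the affine-plane structure, no clever choice of $r-1$ components does better than missing $b(r-2)$ vertices; this is exactly the inclusion–exclusion computation from the "if" direction, so the two directions share that core lemma, which I would isolate and prove once.
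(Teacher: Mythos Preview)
Your overall architecture matches the paper's, but there are two concrete gaps.

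\textbf{The ``if'' direction.} Your claim that any choice of $r-1$ components of distinct colors misses \emph{exactly} $b(r-2)$ vertices is false for $r\ge 4$: take $r-1$ lines in general position (no three concurrent); inclusion--exclusion gives $|\bigcup L_j|=(r-1)^2-\binom{r-1}{2}$, which is strictly smaller than $(r-1)^2-(r-2)$. The correct statement is that any such choice misses \emph{at least} $b(r-2)$ vertices, with equality exactly when the lines are concurrent. The paper gets the upper bound by a one-line greedy argument: the first component covers $b(r-1)$ vertices and each subsequent one meets the first in $b$ vertices, hence contributes at most $b(r-2)$ new ones, giving $b(r-1)+(r-2)\cdot b(r-2)=b((r-1)^2-(r-2))$. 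Your inclusion--exclusion route can be made to work, but as written it asserts the wrong thing.

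\textbf{The ``only if'' direction.} You extract from Case~2 that $k_i=r-1$ and that all components of each color have size $n/(r-1)$, and that every edge has $1$ or $r$ colors. What you are missing is the regularity of pairwise intersections: for \emph{any} $C_i\in\mathcal{C}_i$, $C_j\in\mathcal{C}_j$ with $i\ne j$ one has $|C_i\cap C_j|=n/(r-1)^2$. The paper obtains this not from Case~2 but by revisiting Case~1: sharpness forces every difference $|C_i-C_j|$ to equal the average $\frac{(r-2)}{(r-1)^2}n$ (otherwise a minimum one would be strictly smaller and give a better cover), hence all pairwise intersections are equal. Combined with the ``$1$ or $r$ colors'' dichotomy, this yields the key structural claim (the paper's Claim~\ref{cl:inters_nice}): whenever $C_1\cap\cdots\cap C_r\ne\emptyset$, it equals $C_i\cap C_j$ for every pair, so each fiber has size exactly $n/(r-1)^2$. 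Your ``incidence-counting argument'' for fiber regularity is too vague to replace this step. Finally, your worry about contracting edges with all $r$ colors is misplaced: those edges are not an obstacle but precisely the fibers of the blowup map $f$; the paper neither contracts them nor rules them out.
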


\begin{proof}
Suppose $G$ is a sharp example, i.e., no $r-1$ monochromatic components of
pairwise different colors can cover more than
$\big(1-\frac{r-2}{(r-1)^2}\big)\cdot n$ vertices and
$\big(1-\frac{r-2}{(r-1)^2}\big)\cdot n$ is an integer.

As noted in the beginning of the proof of Theorem \ref{thm:cover_diff}, if the edge-coloring of $G$ is not spanning or $r=2$, then all the vertices of $G$ can be colored by $r-1$ monochromatic components of pairwise different colors, hence in these cases, there is no sharp example.

Now suppose that the coloring of $G$ is spanning, and $r\geq 3$. We examine the proof of Theorem \ref{thm:cover_diff} to see how the inequalities can be equalities.
In Case 1, $k_1=\dots = k_r=r-1$ for a sharp example, since otherwise $\frac{k_r-1}{k_1\cdot k_r}\cdot n$ would be strictly smaller than $\frac{r-2}{(r-1)^2}\cdot n$.

Also in Case 2, $k_1=\dots = k_r=r-1$ for a sharp example, 
since we need $M_i= \frac{n^2}{2(r-1)}-\frac{n}{2}$ for each $i$. But if $k_i<r-1$ for some $i$, then $M_i\geq \frac{n^2}{2 k_i}-\frac{n}{2} > \frac{n^2}{2(r-1)}-\frac{n}{2}$.

Hence a sharp example is necessarily in the intersection of Case 1 and Case 2,
and the bounds in both cases are sharp for it. 

We claim that the intersection of any two components of different colors must
have cardinality exactly $\frac{n}{(r-1)^2}$ (and consequently,
the cardinality of any monochromatic component is exactly
$\frac{n}{r-1}$). Let $i,j\in [r]$ be two different
colors. We know $k_i=k_j=r-1$ and by  (\ref{eq:komp_kul})
\begin{equation} 
\sum_{C_i\in \mathcal{C}_i,\; C_j\in \mathcal{C}_j} |C_i-C_j|=(r-2)\cdot n.
\end{equation}
Choose $C'_i\in \mathcal{C}_i$ and $C'_j\in \mathcal{C}_j$ such that
$s=|C'_i-C'_j|$ is minimum and recall from the proof of Case 1 that in this
case $C'_i\cap C'_j\ne\emptyset$. If $s<\frac{r-2}{(r-1)^2}n$, then for any
$x\in C'_i\cap C'_j$, the components $\cC(x,[r]-\{i\})$ cover each
vertex outside $C'_i-C'_j$, hence strictly more than
$\big(1-\frac{r-2}{(r-1)^2}\big)\cdot n$ vertices but this contradicts the
assumption.  Since $s$ is the minimum, it cannot be bigger than
the average, thus for any $C_i\in \mathcal{C}_i$ and $C_j\in \mathcal{C}_j$ we
have $|C_i-C_j|=\frac{r-2}{(r-1)^2}n$. Now take any $C_i\in \mathcal{C}_i$ and
$C_j,C'_j\in \mathcal{C}_j$. As $|C_i-C_j|=|C_i-C'_j|$, we also have $|C_i\cap
C_j|=|C_i\cap C'_j|$.  By symmetry we also get $|C_i\cap C_j|=|C'_i\cap C_j|$
for any $C_j\in \mathcal{C}_j$ and $C_i,C'_i\in \mathcal{C}_i$, proving the
claim.

Moreover, from $t= \frac{1}{r-1}\cdot \big[\sum_{i=1}^r M_i-\binom{n}{2}\big]$,  
for each edge $uv\in E(G)$ either $|\col(uv)|=1$ or $|\col(uv)|=r$.
 From this, the following useful property follows: 

\begin{claim}\label{cl:inters_nice}
If $C_1\cap \dots \cap C_r\neq \emptyset$ where $C_1\in \mathcal{C}_1, \dots,
C_r\in\mathcal{C}_r$, then for arbitrary $1\leq i< j\leq r$,
we have $C_i\cap C_j=C_1\cap \dots \cap C_r$.
\end{claim}
\begin{proof}
If there were a vertex $x\in C_1\cap \dots \cap C_r$ and a vertex $y\in C_i\cap C_j- C_\ell$ for some $\ell$, then the edge $xy$ would have color $i$ and $j$ but not color $\ell$, which would contradict the fact that either $|\col(xy)|=1$ or $|\col(xy)|=r$.
\end{proof}

Now let us take the following incidence structure $\mathcal{A}$: Let the points of $\mathcal{A}$ be the nonempty intersections $C_1\cap \dots \cap C_r\neq \emptyset$, where $C_1\in \mathcal{C}_1, \dots, C_r\in\mathcal{C}_r$. 
Let the lines of $\mathcal{A}$ be the monochromatic components of $G$. Let a point corresponding to $C_1\cap \dots \cap C_r\neq \emptyset$ be incident with the lines corresponding to $C_1, \dots , C_r$.
Since each vertex of $G$ is incident with edges of each color, this way each vertex of $G$ is mapped to a point of $\mathcal{A}$. Also, for a nonempty intersection, $C_1\cap \dots \cap C_r=C_1\cap C_2$. Since $|C_1\cap C_2|=\frac{n}{(r-1)^2}$, each point of $\mathcal{A}$ corresponds exactly to $\frac{n}{(r-1)^2}=:b$ vertices of $G$. 

We claim that $\mathcal{A}$ is an affine plane of order $r-1$. Moreover, we claim that two lines are disjoint if and only if the corresponding monochromatic components have the same color. Note that if we prove these statements, it follows that $G$ is the blowup of an affine plane.

We have already proved that two components of $G$ of different colors have a nonempty intersection. On the other hand, two monochromatic components of the same color are disjoint by the definition of a component. Hence indeed two lines in $\mathcal{A}$ are disjoint if and only if the corresponding monochromatic components have the same color.
To prove that $\mathcal{A}$ is an affine plane of order $r-1$, we need to check the five conditions given in Definition \ref{def:aff_plane}.

(i) 
We claim that the points corresponding to $C_1\cap \dots \cap C_r\neq \emptyset$ and $C'_1\cap \dots \cap C'_r\neq \emptyset$ where $C_1,C'_1\in \mathcal{C}_1, \dots, C_r,C'_r\in\mathcal{C}_r$ have at least one common monochromatic component. Indeed, take $x\in C_1\cap \dots \cap C_r$ and $y\in C'_1\cap \dots \cap C'_r$. Since $G$ is complete, $xy\in E(G)$. This edge has at least one color, hence $x$ and $y$ have a common monochromatic component.

Now we claim that these two points have at most one common monochromatic component. Indeed, by Claim \ref{cl:inters_nice}, if $C_i=C'_i$ and $C_j=C'_j$ for some $i\neq j$, then $C_1\cap \dots \cap C_r=C_i\cap C_j=C'_i\cap C'_j=C'_1\cap \dots \cap C'_r$.

(ii) Let $C$ be the monochromatic component of $G$ corresponding to the line $L$.
As we noted before, two monochromatic components in $G$ are disjoint if and only if they have the same color. Suppose that $C$ has color $i$. Let $C'$ be the component of color $i$ that contains $x$. The line corresponding to $C'$ satisfies the requirements of (ii). 

(iii) 
If there is a line containing only one point, let the monochromatic component of $G$ corresponding to the line be $C_i\in \mathcal{C}_i$ and the intersection corresponding to the point be $C_1\cap \dots \cap C_r\neq \emptyset$ where 
$C_1\in \mathcal{C}_1,\dots , C_r\in \mathcal{C}_r$. From the fact that the line has only one point, $C_i\subseteq C_1\cap \dots \cap C_{i-1}\cap C_{i+1}\cap \dots \cap C_r$.
But then $C_1,\dots C_{i-1},C_{i+1},\dots ,C_r$ cover all the vertices of $G$ since $G$ is complete. Thus, the example is not sharp.

(iv) 
It can be seen from the definition that each point of $\mathcal{A}$ is incident with $r\geq 3$ lines.

(v) This follows from the fact that two lines are parallel if and only if they correspond to monochromatic components of the same color, and for each color, there are exactly $r-1$ monochromatic components.

With this, we have proved that any sharp example needs to be a blowup of an affine plane. 
Now we prove that the blowup of an affine plane is always a sharp example.

We claim that $r-1$ monochromatic components of pairwise different colors cover at most $\big( 1-\frac{r-2}{(r-1)^2}\big)\cdot n= \big( 1-\frac{r-2}{(r-1)^2}\big) \cdot b(r-1)^2=b\cdot((r-1)^2-r+2)$ vertices.
Indeed, take the first component. This covers $b(r-1)$ vertices. The second component has a different color from the first, hence they have an intersection of size $\frac{n}{(r-1)^2}=b$. Hence the two components together cover at most $b(2(r-1)-1)$ vertices. And so on, each subsequent component needs to have an intersection of size at least $b$ with the union of the previous ones, hence altogether, they cover at most $b((r-1)^2-r+2)$ vertices. We can also see, that for covering exactly $b((r-1)^2-r+2)$ vertices, we need to take $r-1$ monochromatic components having a common intersection of $b$ points.
\end{proof}

\begin{remark}
In the case if $\big(1-\frac{r-2}{(r-1)^2}\big)\cdot n$ is not an integer, it would be reasonable to call the multi $r$-edge-colored $G$ sharp if the number of vertices coverable by $r-1$ monochromatic components of pairwise different colors is the minimum possible, i.e., 
$$\Bigl\lceil \big(1-\frac{r-2}{(r-1)^2}\big)\cdot n\Bigr\rceil.$$ 
We do not know the structure of the sharp examples in this sense.
\end{remark}

Recall the definition of a truncated projective plane:
\begin{defn}
	Take a projective plane of order $r-1$. The \emph{truncated projective plane of order $r-1$} is the following hypergraph: Remove a point and the lines incident to it from the projective plane. Let the vertices of the hypergraph be the remaining points, and the hyperedges be the remaining lines.
\end{defn}

Note that this is an $r$-partite $r$-uniform hypergraph (the partite classes correspond to the unremoved points that were contained by a removed line).
Truncated projective planes play an important role in the study of Ryser's
conjecture. They give a family of sharp examples. Moreover, the only other known family of extremals \cite{ABPSz} is also constructed using truncated projective planes.
In \cite{HNSz2} it is shown that the truncated
Fano-plane is the main building block in the characterization of the sharp hypergraphs for Ryser's conjecture in the case $r=3$.  In addition,
the near-extremal family recently constructed by Haxell and Scott \cite{HS} is also
based on truncated projective planes.

Note that if one switches the role of vertices and hyperedges, an affine plane becomes a truncated projective plane.
Hence Theorem \ref{thm:cover_diff_sharp} gives the following result for hypergraphs:
\begin{theorem}\label{thm:cover_diff_hyp_sharp}
	Let $\cH$ be an $r$-partite $r$-uniform intersecting hypergraph.
	The maximum number of hyperedges coverable by a multi-colored set of
        size  $r-1$ equals to
	$\big(1-\frac{r-2}{(r-1)^2}\big)\cdot |E(\cH)|$ if and only if $\cH$
        can be obtained from a truncated projective plane by taking $b$
        parallel copies of each hyperedge for some fixed integer $b$.
\end{theorem}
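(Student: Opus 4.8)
The plan is to derive Theorem~\ref{thm:cover_diff_hyp_sharp} from Theorem~\ref{thm:cover_diff_sharp} by passing through the Gy\'arf\'as construction (Definition~\ref{def:Gyarfas_constr}) and then dualizing the resulting affine-plane structure, so the first step is to pin down, once and for all, the dictionary between $\cH$ and $G=G(\cH)$. Since $\cH$ is intersecting, $G$ is a complete multi $r$-edge-colored graph. Here I would assume harmlessly that every vertex of $\cH$ lies in some hyperedge, as such vertices affect neither side of the asserted equivalence; under this assumption, as recorded around Remark~\ref{remark_multi}, $V(G)=E(\cH)$, the monochromatic components of color $i$ in $G$ biject with the vertices of $\cH$ in the $i$-th class, and a set $T\subseteq V(\cH)$ covers the hyperedges of $\cH$ exactly when the corresponding monochromatic components cover $V(G)$. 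Observing additionally that $T$ is a multi-colored set of size $r-1$ iff the associated components form a family of $r-1$ monochromatic components of pairwise different colors, I would conclude that the maximum number of hyperedges of $\cH$ coverable by a multi-colored set of size $r-1$ equals the maximum number of vertices of $G$ coverable by $r-1$ monochromatic components of pairwise different colors, while $|E(\cH)|=|V(G)|$. Hence $\cH$ is extremal for Theorem~\ref{thm:cover_diff_hyp_sharp} if and only if $G(\cH)$ is extremal for Theorem~\ref{thm:cover_diff_sharp}, which by that theorem happens iff $G(\cH)$ is the blowup of an affine plane of order $r-1$.

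The second step is to translate ``$G(\cH)$ is the blowup of an affine plane of order $r-1$'' into ``$\cH$ is $b$ parallel copies of a truncated projective plane of order $r-1$''. Writing $G(\cH)$ as the blowup of $\mathcal{A}=(\mathcal{P},\mathcal{L})$ with parameter $b$ and map $f\colon V(G)\to\mathcal{P}$, the dictionary identifies the vertices of $\cH$ with the monochromatic components of $G$, i.e.\ with the lines $\mathcal{L}$ (the $i$-th class being the $i$-th parallel class), and identifies each hyperedge $E_u\in E(\cH)=V(G)$ with the set of monochromatic components containing $u$, which is exactly the pencil of lines of $\mathcal{A}$ through $f(u)$. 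Thus the distinct hyperedges of $\cH$ are the pencils $\{L\in\mathcal{L}: p\in L\}$ for $p\in\mathcal{P}$, each appearing $b$ times, i.e.\ $\cH$ is $b$ parallel copies of the dual incidence structure $\mathcal{A}^{*}=(\mathcal{L},\mathcal{P})$. At this point I would invoke the classical point--line duality: completing $\mathcal{A}$ to a projective plane $\Pi$ of order $r-1$ by adjoining the line at infinity (whose $r$ points are the parallel classes), dualizing to the projective plane $\Pi^{*}$ of order $r-1$, and noting that in $\Pi^{*}$ the line at infinity becomes a point $P$ and its points become the $r$ lines through $P$, so that deleting $P$ and those lines from $\Pi^{*}$ yields exactly $\mathcal{A}^{*}$. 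Hence $\mathcal{A}^{*}$ is the truncated projective plane of order $r-1$, its $r$ partite classes being the images of the $r$ parallel classes of $\mathcal{A}$, and $\cH$ is $b$ parallel copies of it. The converse direction runs this chain backwards: from $\cH$ equal to $b$ parallel copies of a truncated projective plane of order $r-1$ one reconstructs the affine plane $\mathcal{A}$ and verifies $G(\cH)$ is its blowup, hence extremal by Theorem~\ref{thm:cover_diff_sharp}, hence so is $\cH$.

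The main obstacle is not a single deep idea but the careful bookkeeping of the two translations: making sure the correspondence between multi-colored covers of size $r-1$ and systems of $r-1$ distinct-colored monochromatic components is tight in both directions, and stating the affine/truncated-projective duality so that parallel classes match partite classes and the blowup parameter $b$ matches the number of parallel copies. Once these are written out carefully, the ``if and only if'' in Theorem~\ref{thm:cover_diff_hyp_sharp} is an immediate consequence of the corresponding equivalence in Theorem~\ref{thm:cover_diff_sharp}.
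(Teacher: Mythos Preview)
Your proposal is correct and follows essentially the same approach as the paper: the paper simply states that ``if one switches the role of vertices and hyperedges, an affine plane becomes a truncated projective plane'' and derives Theorem~\ref{thm:cover_diff_hyp_sharp} directly from Theorem~\ref{thm:cover_diff_sharp} via the Gy\'arf\'as correspondence, exactly as you outline. Your write-up merely spells out the two translations (multi-colored covers $\leftrightarrow$ distinct-color components, and affine-plane blowup $\leftrightarrow$ parallel copies of a truncated projective plane) with more care than the paper's one-line remark, but there is no substantive difference in method.
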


\section{Ryser's conjecture in the case
  \texorpdfstring{$\Delta(\cH)=2$}{Delta(H) = 2}}

For $r=2$, Ryser's conjecture follows from K\H onig's theorem. In this section, we prove Ryser's conjecture for the very special case $\Delta(\cH)=2$ and $r\geq 3$. We note, that in this special case, the hypergraph does not even need to be r-partite for Ryser's bound to hold.

\begin{theorem} 
Let $\cH$ be an $r$-uniform hypergraph with $r\geq 3$ and $\Delta(\cH)=2$. Then $\tau(\cH)\leq (r-1)\cdot \nu(\cH)$.
\end{theorem}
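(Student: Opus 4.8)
The plan is to reduce the statement to a purely graph-theoretic fact about an auxiliary graph built from $\cH$. Since $\Delta(\cH)=2$, every vertex of $\cH$ lies in at most two hyperedges, so each vertex of $\cH$ is naturally either a ``private'' vertex of a single hyperedge or an ``incidence'' between exactly two hyperedges. I would form the \emph{intersection (multi)graph} $L=L(\cH)$ whose vertex set is $E(\cH)$, with an edge joining $e,f\in E(\cH)$ for each vertex of $\cH$ lying in both $e$ and $f$ (so $L$ may have parallel edges, and isolated vertices corresponding to hyperedges disjoint from all others). The key translation is: a transversal $T$ of $\cH$ corresponds to, for each edge of $L$, picking one of its two endpoints' shared vertex, plus for each hyperedge not otherwise hit, picking a private vertex; and a matching in $\cH$ corresponds to an independent set in $L$ (a set of pairwise nonadjacent hyperedges). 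Thus it suffices to show: for a multigraph $L$ on $m$ vertices with independence number $\alpha(L)$ (in the hypergraph sense, i.e. the ordinary independence number of the underlying simple graph), one can hit all edges and cover all vertices appropriately with at most $(r-1)\alpha(L)$ vertices of $\cH$.

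The main step is a counting/covering argument on $L$. First I would discard isolated vertices of $L$ (hyperedges disjoint from everything): each such hyperedge needs one vertex of $T$, contributes $1$ to $\nu$, and $1\le r-1$ since $r\ge 3$, so they are handled separately and we may assume $L$ has no isolated vertices. Then I would work component by component: if $L$ has components $L_1,\dots,L_k$ with $|V(L_j)|=m_j$, then $\nu(\cH)=\sum_j \alpha(L_j)$ and $\tau(\cH)\le \sum_j \tau(\cH_j)$ where $\cH_j$ is the sub-hypergraph on the hyperedges of $L_j$, so it is enough to prove the bound for each connected component. For a connected multigraph $L_j$ on $m_j\ge 2$ vertices with no isolated vertices, a spanning tree has $m_j-1$ edges; choosing one shared $\cH$-vertex per tree edge hits every hyperedge in $L_j$ (since the tree is spanning and connected, every hyperedge is an endpoint of some chosen incidence), giving $\tau(\cH_j)\le m_j-1$. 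So the whole reduction comes down to the inequality $m_j-1\le (r-1)\alpha(L_j)$ for each component, i.e. $m_j\le (r-1)\alpha(L_j)+1$.

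The remaining inequality $m_j\le(r-1)\alpha(L_j)+1$ is where the uniformity $r$ finally enters, and I expect this to be the crux. The point is a degree bound on $L_j$: a hyperedge $e$ has exactly $r$ vertices, and with $\Delta(\cH)=2$ each vertex of $e$ contributes at most one edge of $L$ at $e$, so $\deg_{L}(e)\le r$ (counting multiplicity), hence the underlying simple graph of $L$ has maximum degree $\le r$. A graph on $m_j$ vertices with maximum degree $\le r$ has independence number $\alpha\ge m_j/(r+1)$, which only gives $m_j\le (r+1)\alpha$ — too weak. To get the sharper $m_j\le (r-1)\alpha+1$ I would use connectedness together with the maximum-degree bound: for instance, greedily build an independent set by repeatedly picking a vertex and deleting it together with its $\le r$ neighbors, but exploit that in a connected graph the first deletion step removes a vertex and its neighbors while subsequent steps, if one always picks a vertex adjacent to the already-removed part, remove at most $r$ new vertices but ``reuse'' at least one edge into the removed region — a standard amortization giving $\alpha(L_j)\ge (m_j-1)/(r-1)$ for connected graphs of maximum degree $\le r$. (Equivalently: a connected graph on $m$ vertices with $\le m-1+\text{(excess)}$ edges and max degree $\le r$ has a large independent set; one can also argue via a DFS/BFS layering.) If this amortized bound turns out to need care for multigraphs or for the max-degree-exactly-$r$ boundary cases, the fallback is to treat small components ($m_j\le r$, where $\alpha\ge 1$ already suffices since $m_j-1\le r-1$) directly and only run the amortization for $m_j>r$; this case split should make the estimate robust. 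Assembling: $\tau(\cH)\le\sum_j(m_j-1)\le\sum_j(r-1)\alpha(L_j)=(r-1)\nu(\cH)$, which completes the proof. The hard part will be pinning down the clean statement and proof of ``connected, max degree $\le r$ $\Rightarrow$ $\alpha\ge(m-1)/(r-1)$'' and making sure the multigraph subtleties (parallel edges, which do not change $\alpha$ but do inflate degree) are correctly handled; everything else is bookkeeping.
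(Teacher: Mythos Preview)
Your reduction to the intersection (equivalently, dual) graph $L$ is correct and is exactly what the paper does: $\nu(\cH)=\alpha(L)$ and, after disposing of degree-one vertices, $\tau(\cH)=\varrho(L)$, the minimum edge-cover number of the (multi)graph $L$, which has maximum degree at most $r$.

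The gap is in the second half. Your spanning-tree estimate $\varrho(L_j)\le m_j-1$ is correct but far too weak, and the inequality you then need, $\alpha(L_j)\ge (m_j-1)/(r-1)$ for a connected graph of maximum degree at most $r$, is simply \emph{false}. Take $L_j=K_{r+1}$: this genuinely arises (let $V(\cH)=E(K_{r+1})$ and let the hyperedges be the $r$-element stars at the vertices of $K_{r+1}$; then $\cH$ is $r$-uniform with $\Delta(\cH)=2$ and $L=K_{r+1}$). Here $m_j=r+1$, $\alpha(L_j)=1$, and $(m_j-1)/(r-1)=r/(r-1)>1$. For $r=3$ the Petersen graph is another counterexample that actually occurs as $L$: $m_j=10$, $\alpha=4$, but $(m_j-1)/(r-1)=9/2$. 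Your amortization sketch does not give what you claim either: picking a new independent vertex adjacent to the removed region still deletes that vertex plus up to $r-1$ \emph{new} neighbours, i.e.\ up to $r$ new vertices per step, yielding only $\alpha\ge (m-1)/r$, not $(m-1)/(r-1)$. Your ``fallback'' for small components does not rescue $K_{r+1}$, which has $m_j=r+1>r$.

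What is missing is a sharper bound on $\varrho(L_j)$ than $m_j-1$. The paper uses essentially the Gallai identity $\varrho(G)=|V(G)|-\mu(G)$ (realised constructively: take a maximum independent set $I$, a maximum matching $M$ in $G-I$, a matching from the leftover $Y=V\setminus(I\cup V(M))$ into $I$ via Hall, and one extra edge for each still-uncovered vertex of $I$) to get $\varrho(G)\le |M|+|I|$. Then Brooks' theorem gives $\alpha(G)\ge n/\Delta$ when $G$ is neither a clique nor an odd cycle, whence $|M|\le (n-\alpha)/2\le (\Delta-1)\alpha/2$ and $\varrho\le |M|+\alpha\le(\Delta-1)\alpha$ for $\Delta\ge 3$; cliques and cycles are handled by a one-line direct check. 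In your $K_{r+1}$ example this gives $\varrho=\lceil (r+1)/2\rceil\le r-1=(r-1)\alpha$, which is exactly what is needed and what the spanning-tree bound misses.
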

\begin{proof}
Let the dual of a hypergraph $\cH$ be the following hypergraph $\cH^*$, with multiple hyperedges possible:

 $$V(\cH^*)= E(\cH)$$
 $$E(\cH^*)= \{\{ e\in E(\cH): e\ni v\}: v\in V(\cH)\} \quad \textrm{taken as a multiset}.$$

We have $\cH^{**}=\cH$, hence vertices of $\cH$ correspond exactly to hyperedges in $\mathcal{H^*}$ and hyperedges of $\cH$ correspond exactly to vertices in $\mathcal{H^*}$. 

Note that a set of vertices $T\subseteq V(\cH)$ covers the hyperedges of $\cH$
if and only if the corresponding hyperedge set in $\mathcal{H^*}$ covers the
vertices of $\mathcal{H^*}$, so $\tau(\cH)=\varrho(\cH^*)$.

The degree of a vertex of $\mathcal{H^*}$ is the cardinality of the
corresponding hyperedge of $\cH$. Hence $\cH$ is $r$-uniform if and only if
$\mathcal{H^*}$ is $r$-regular, consequently $\Delta(\cH^*)=r$.
By definition, $\alpha'(\cH^*)=\nu(\cH)$.

If $\Delta(\cH)=2$, then $\mathcal{H^*}$ is a hypergraph with hyperedge cardinalities one or two, and the statement of the theorem is equivalent to $\varrho(\mathcal{H^*})\leq (\Delta(\mathcal{H^*})-1) \alpha'(\mathcal{H^*})$.

We can suppose that there are no hyperedges of cardinality one in $\mathcal{H^*}$. Indeed, if a hyperedge of cardinality one is contained by a hyperedge of cardinality two, then we can remove the hyperedge of cardinality one. This does not change the value of $\alpha'$, and $\Delta=\Delta(\cH^*)$ can only decrease. 
Moreover, the value of $\varrho$ can only increase by removing a hyperedge, since a covering hyperedge set of the modified hypergraph is also a covering hyperedge set in the original hypergraph.
Hence if the statement is true for the hypergraph after removing a hyperedge, then the statement is also true for the original hypergraph.

If a hyperedge of cardinality one is not contained by a hyperedge of cardinality two, then this hyperedge (or a parallel copy of it) needs to occur in each hyperedge cover. Hence leaving this vertex and the cardinality one hyperedges incident to it, $\varrho$ decreases by one. On the other hand, $\alpha'$ also decreases by one and $\Delta$ can only decrease. Hence if the statement is true to the modified hypergraph, it is also true for the original hypergraph.

The following lemma proves the theorem if the cardinality two hyperedges form a graph which is not a cycle.

\begin{lemma}
    If $G$ is a graph which is not a cycle, then $\varrho(G)\leq (\Delta(G)-1)\cdot \alpha(G)$.
\end{lemma}
  \begin{proof}
We will denote by $G[X]$ the subgraph of $G$ induced by the vertex set $X$. For a set of vertices $U\subseteq V$, we denote by $\Gamma(U)$ the set of neighbors of $U$.

The statement is easily seen to be true for complete graphs with at least four vertices, hence we can suppose that $G$ is not complete. 

Let $n=|V(G)|$.
Since $G$ is not a cycle, using Brooks' theorem, $G$ is colorable by $\Delta(G)$ colors. As consequence, $\alpha(G)\geq \frac{n}{\Delta}$.

Take an independent vertex set $I\subseteq V(G)$ of maximum size, and take a maximum matching $M$ in $G[V(G)-I]$.
Let $X=V(M)$ and $Y=V-I-X$. Since $M$ is a maximum matching in $G[V(G)-I]$, it follows that $Y$ is an independent set.
Hence $G[Y\cup I]$ is a bipartite graph.

We show that in $G[Y\cup I]$ there is a matching covering $Y$.
Suppose for contradiction that the condition of Hall's theorem is not satisfied, i.e., $\exists U \subseteq Y$ such that $|\Gamma(U)| < |U|$. Then $(I- \Gamma(U)) \cup U$ is an independent set, whose size is greater than $|I|$, which contradicts the choice of $I$.

Now take the following set of edges: the edges of $M$, the edges of a matching covering $Y$ in $G[Y\cup I]$, and for each thus uncovered vertex in $I$, an edge covering it. This is an edge cover of $G$ of cardinality at most $|M|+|Y|+(|I|-|Y|)=|M|+|I|$. Thus $\varrho\leq |M|+|I|$.

We show that $|M|+|I|\leq (\Delta(G)-1)\alpha(G)$. Indeed, since $|X|\leq n-|I|= n-\alpha(G)\leq n(1-1/\Delta)$, we have $|M|\leq \lfloor \frac{n(1-1/\Delta)}{2}\rfloor=\lfloor (\Delta-1)\frac{n}{2\Delta}\rfloor \leq \lfloor \frac{(\Delta-1)\alpha}{2} \rfloor$. Thus $\varrho\leq |M|+|I|\leq \lfloor \frac{(\Delta-1)\alpha}{2} \rfloor + \alpha \leq (\Delta-1)\alpha$.
\end{proof}

Now the only remaining case is if the cardinality two hyperedges of $H^*$ form a cycle, that is, $H^*$ is a cycle with some additional cardinality one hyperedges. Suppose that the cycle has $l$ vertices, plus there are $k$ isolated vertices. Then the vertex set of $H^*$ can be covered by $\lceil \frac{l}{2} \rceil + k$ hyperedges, and $\alpha'(H^*)=\lfloor \frac{l}{2} \rfloor + k$. Since $r=\Delta(H^*)>2$, this means $\varrho(\mathcal{H^*})\leq (\Delta(\mathcal{H^*})-1) \alpha'(\mathcal{H^*})$. 
\end{proof}


\begin{thebibliography}{99}

\bibitem{ABPSz}
{\scshape A.~Abu-Khazneh, J.~Bar\'at, A.~Pokrovskiy, T.~Szab\'o}
\newblock  A family of extremal hypergraphs for Ryser's conjecture,
\newblock  {\itshape arXiv:1605.06361}
(2016) 

\bibitem{AP}
{\scshape A. Abu-Khazneh, A. Pokrovskiy}
\newblock  Intersecting extremal constructions in
Ryser’s Conjecture for r-partite hypergraphs,
\newblock  {\itshape arXiv:1409.4938} 

\bibitem{A}
{\scshape R.~Aharoni}
\newblock  Ryser's conjecture for tripartite 3-graphs,
\newblock  {\itshape Combinatorica} 
{\bf 21} (2001), pp.~1--4.

\bibitem{ABW}
{\scshape R.~Aharoni, J.~Barát, I.~M.~Wanless}
\newblock Multipartite hypergraphs achieving equality in Ryser's conjecture
\newblock  {\itshape Graphs and Combinatorics} 
{\bf 32} (2016), pp.~1--15.

\bibitem{AH}
{\scshape R.~Aharoni, P.~Haxell}
\newblock  Hall's theorem for hypergraphs,
\newblock  {\itshape J. Graph Theory} 
{\bf 35} (2000), pp.~83--88.

\bibitem{AB}
{\scshape D.~S.~Altner, J.~P.~Brooks}
\newblock  Coverings and matchings in r-partite hypergraphs
\newblock  {\itshape Optimization online} (2010) \\
\href{http://www.optimization-online.org/DB\_HTML/2010/06/2666.html}
{www.optimization-online.org/DB\_HTML/2010/06/2666.html}


\bibitem{BS}
{\scshape S.~Bustamante, M.~Stein}
\newblock  Monochromatic tree covers and Ramsey numbers for set-coloured graphs
\newblock  {\itshape Discrete Mathematics}
{\bf 341} (2018) pp.~266--276.


\bibitem{FHMW}
{\scshape N.~Franceti\'c, S.~Herke, B.~D.~McKay,
	I.~M.~Wanless}
\newblock  On Ryser’s conjecture for linear intersecting
multipartite hypergraphs,
\newblock  {\itshape European Journal of Combinatorics} {\bf 61} 
(2017) pp.~91--105.

\bibitem{F}
{\scshape Z.~F\"uredi}
\newblock  Maximum degree and fractional matchings in uniform hypergraphs,
\newblock  {\itshape Combinatorica} 
{\bf 1} (1981), pp.~155--162.

\bibitem{Gy}
{\scshape A.~Gy\'arf\'as}
\newblock  Partition coverings and blocking sets in hypergraphs,
\newblock  {\itshape MTA SZTAKI tanulm\'anyok}
{\bf 71} (1977)  (in Hungarian)

\bibitem{HNSz1}
{\scshape P.~Haxell, L.~Narins, T.~Szab\'o}
\newblock  Extremal Hypergraphs for Ryser’s Conjecture:
Connectedness of Line Graphs of Bipartite
Graphs,
\newblock  {\itshape arXiv:1401.0169}
(2013) 

\bibitem{HNSz2}
{\scshape P.~Haxell, L.~Narins, T.~Szab\'o}
\newblock  Extremal Hypergraphs for Ryser's Conjecture: Home-Base Hypergraphs,
\newblock  {\itshape arXiv:1401.0171}
(2013) 

\bibitem{HS}
{\scshape P.~Haxell, A.~Scott}
\newblock  A note on intersecting hypergraphs with large cover number,
\newblock {\itshape Electronic Journal of Combinatorics} {\bf 24}, Issue 3 (2017)

\bibitem{HS12}
{\scshape P.~Haxell, A.~Scott}
\newblock  On Ryser’s Conjecture,
\newblock {\itshape Electronic Journal of Combinatorics} {\bf 19}, Issue 2, P23 (2012)

\bibitem{H}
{\scshape J.~R.~Henderson}
\newblock  Permutation Decompositions of $(0,1)$-matrices and decomposition
transversals,
\newblock  {\itshape Thesis, Caltech} (1971) pp.26. \\
\href{http://thesis.library.caltech.edu/5726/1/Henderson\_jr\_1971.pdf}{thesis.library.caltech.edu/5726/1/Henderson\_jr\_1971.pdf
}

\bibitem{KZ} {\scshape Z.~Kir\'aly}
\newblock    Monochromatic components in edge-colored complete
hypergraphs,
\newblock    {\itshape European Journal of Combinatorics} {\bf 35} (2013)
  pp.~374--376.

\bibitem{KTtech}
{\scshape Z.~Kir\'aly and L.~T\'othm\'er\'esz}
\newblock  On some special cases of Ryser's conjecture,
\newblock {\itshape Egres Technical Report TR-2016-14},
\href{http://www.cs.elte.hu/egres/www/tr-16-14.html}
{www.cs.elte.hu/egres/}

\bibitem{K}
{\scshape D.~K\H onig}
\newblock  Theorie der endlichen und unendlichen Graphen,
\newblock  {\itshape Leipzig} 
(1936)

\bibitem{L}
{\scshape L.~Lov\'asz}
\newblock  On minimax theorems of combinatorics,
\newblock  {\itshape Matematikai Lapok} 
{\bf 26} (1975), pp.~209--264.

\bibitem{MSY}
{\scshape T.~Mansour, C.~Song, R.~Yuster}
\newblock  A comment on Ryser's conjecture for intersecting hypergraphs,
\newblock  {\itshape Graphs and Combinatorics}
{\bf 25} (2009), pp.~101--109.

\bibitem{T2}
{\scshape Zs.~Tuza}
\newblock  On special cases of Ryser's conjecture,
\newblock  {\itshape manuscript} (1979) 



\bigskip

\goodbreak\strut

\end{thebibliography}
\end{document}